\documentclass[11pt]{article}
\usepackage{amsfonts}
\usepackage{latexsym}
\usepackage{amsmath}
\usepackage{amssymb}
\usepackage{color}
\usepackage{amsthm}
\usepackage{fullpage}

\usepackage[T1]{fontenc}
\usepackage[utf8]{inputenc} 

\usepackage{enumerate}
\newtheorem{theorem}{Theorem}[section]
\newtheorem{lemma}[theorem]{Lemma}

\newtheorem{proposition}[theorem]{Proposition}

\newtheorem{question}[theorem]{Question}
\newtheorem{remark}[theorem]{Remark}

\theoremstyle{definition}
\newtheorem{definition}[theorem]{Definition}
\newtheorem*{example}{Example}

\newtheorem{thmy}{Theorem}

\newtheorem*{note*}{Note}

\makeatletter

\newcommand{\R}{\mathbb{R}}

\newcommand{\HH}{\mathcal H}

\makeatother

\makeatletter
\newcommand{\subjclass}[2][1991]{%
	\let\@oldtitle\@title%
	\gdef\@title{\@oldtitle\footnotetext{#1 \emph{Mathematics subject classification.} #2}}%
}
\newcommand{\keywords}[1]{%
	\let\@@oldtitle\@title%
	\gdef\@title{\@@oldtitle\footnotetext{\emph{Key words and phrases.} #1.}}%
}
\makeatother

\begin{document}
	
	\title{On convex  bodies with  rotationally symmetric planar projections}

	\author{Sergii Myroshnychenko, Dmitry Ryabogin, and Christos Saroglou}
	
	\subjclass[2010]{52A30, 52A20, 52A39}
	\keywords{Sections and projections of convex bodies, characterizations of balls}

	\date{\today}
	\maketitle
	\begin{abstract}{Let $n\ge 3$ and let \(K\subset\mathbb R^n\) be a convex body. For a two-dimensional linear subspace
	\(P\subset\mathbb R^n\), let \(K|P\) be the orthogonal projection of \(K\) onto \(P\).
	We prove that if for every two-dimensional subspace \(P\), the planar
	convex body \(K|P\) has \(q\)-fold rotational symmetry up to translation, then 
	for  \(q\ge4\), this forces \(K\) to be an Euclidean ball. The case
	\(q=3\) is exceptional: non-spherical examples exist.}
\end{abstract}

\section{Introduction}

A recurring theme in convex geometry and geometric tomography is
the problem of uniqueness: to what extent can a convex body be recovered from information about its sections or projections? Closely related
questions ask whether a body can be characterized by special geometric properties of all its sections or projections. Such problems have
a long history, going back to the work of Minkowski, Funk, Banach,
Alexandrov, Auerbach, Mazur, and many others. We refer the reader to the following works for a few representative examples, \cite{AMU}, \cite{Gr}, \cite{M}, \cite{K1}, \cite{Bur}, \cite{Sch2}, \cite{BHJM}, \cite{IMN},   \cite{Za}, and to the books \cite{G}, \cite{Sc}, for related results.

Let $n\ge 3$. 
In what follows, we fix an orthonormal basis in the $n$-dimensional Euclidean space $\mathbb{R}^n$ and we denote by $\langle x,y\rangle$ the scalar product of two vectors $x,\ y$, with respect to this orthonormal basis. We will use the notation $S^{n-1}$ for the unit sphere in $\mathbb{R}^n$, i.e. the set $\{x\in\mathbb{R}^n:|x|=1\}$, where $|x|=\sqrt{\langle x,x\rangle}$ is the length of $x$. 

Let $2\le j\le n-1$ and let $Gr(n,j)$ be the set of all $j$-dimensional subspaces of ${\mathbb R^n}$. Denote by $O(P,j)$ the group of orthogonal transformations acting on the $j$-dimensional subspace $P$ of ${\mathbb R^n}$ and let $G=G(P,j)$  be a  subgroup of $O(P,j)$. We say that $G$ is {\it complete} if for any ellipsoid ${\mathcal E}\subset P$ centered at the origin, condition $\varphi({\mathcal E})={\mathcal E}$ for all $\varphi\in G$ implies that  ${\mathcal E}$ is an Euclidean ball.

Let \(K|P\) be the orthogonal projection of \(K\) onto \(P\) and let $G$ be complete. We say that \(K|P\) has a  {\it complete symmetry} if  for every  $\varphi\in G$ there is a  translation $a\in P$ for which  $\varphi(K|P)+a=K|P$.
 In this paper we address the following question
\begin{question}\label{vot1}
	Let $n\ge 3$, let $2\le j\le n-1$ and let $K\subset {\mathbb R^n}$ be a convex body. If for all $P\in Gr(n,j)$  the projections $K|P$ have a complete symmetry, does it follow that $K$ is an Euclidean ball?
\end{question}

It was shown in \cite{MRS} that the answer to Question \ref{vot1} is affirmative, provided $K=-K$.
In this paper we answer Question \ref{vot1} in  the case $j=2$. We have
\begin{theorem}\label{cor-main}
	 Let $n\ge 3$ and let \(K\subset\mathbb R^n\) be a convex body. 
	Assume that  for every  \(P\in Gr(n,2)\), the planar
	convex body \(K|P\) has \(q\)-fold rotational symmetry for some  $q=q(P)\ge 4$,  i.e., 
	there exists a rotation $\varphi_q\in SO(P,2)$ by the angle $\frac{2\pi}{q}$,  and a vector $b=b_P\in P$ such that 
	\begin{equation}\label{eq:11}
		\varphi_q(K|P)+ b_P=K|P.
	\end{equation} 
	Then 
 \(K\) is  a Euclidean ball. On the other hand, there exist strictly convex non-symmetric  bodies of constant width such that  all  their $2$-dimensional projections have 
	$3$-fold rotational symmetries.
\end{theorem}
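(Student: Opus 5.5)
The plan is to work with support functions and their Fourier expansions on circles. Let $h=h_K$. For $P\in Gr(n,2)$, the support function of $K|P$ is the restriction of $h$ to $S^{n-1}\cap P$. Write this restriction as $h_P(\theta)$ in an orthonormal frame of $P$. Condition \eqref{eq:11} then says $h_P(\theta+2\pi/q)=h_P(\theta)+\langle b_P,u(\theta)\rangle$. Expand $h_P$ in a Fourier series, $h_P(\theta)=\sum_k (a_k\cos k\theta+b_k\sin k\theta)$. The translation contributes only to the first harmonic. So for every $k\neq 1$ the $k$-th harmonic is invariant under rotation by $2\pi/q$, and it vanishes unless $q\mid k$.

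For $q\ge4$ (and in fact for any $q\ge3$) this kills the second harmonic of $h_P$ for every $P$. The key step is to show that this forces the degree-2 spherical harmonic component $H_2$ of $h$ on $S^{n-1}$ to vanish. Restriction to a great circle commutes with projection onto spherical harmonic degrees only in a weak sense: the restriction of a degree-$m$ harmonic to a circle involves frequencies $m,m-2,\dots$. Hence the second Fourier coefficient of $h_P$ collects contributions from all even degrees $m\ge2$, and this is the main obstacle. I would first try a different invariant that isolates degree 2. The second Fourier coefficient of $h_P$ is the complex number $\int_0^{2\pi} h_P(\theta)e^{-2i\theta}\,d\theta$. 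Its vanishing for all $P$ means that for every orthonormal pair $u,v$,
\[
\int_0^{2\pi} h(\cos\theta\, u+\sin\theta\, v)\cos 2\theta\, d\theta=0 .
\]
Equivalently, the quadratic form $Q_P(x)=\int_{S^1} h(\omega)\langle \omega,x\rangle^2\,d\omega$ on $P$ is a multiple of the identity. The averaged tensor $\int_{S^{n-1}\cap P} h\,\omega\otimes\omega$ then being isotropic on every plane is a Funk-type transform condition. I would use the injectivity of the Radon/Funk transform on even functions to conclude that the even part of $h$ is constant. If that is not quite enough, one should note that $q\ge4$ kills the third harmonic as well. For $q\ge5$ it also kills the fourth, and for $q=4$ the fourth survives, which needs separate care.

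Since the argument above only controls even parts, handling the odd part is the second step. For the odd part of $h$, I would use the vanishing of the third (and all odd $k\neq1$ not divisible by $q$) Fourier coefficients of $h_P$, so that the odd part of $h_P$ is linear plus harmonics divisible by $q$. For $q=4$ all odd harmonics other than the first vanish, so $h_P^{odd}$ is linear for every $P$. A function on $S^{n-1}$ whose odd part restricts linearly to every great circle has linear odd part, so after a translation $K=-K$. Then the symmetric result of \cite{MRS} applies, since rotation by $\pi/2$ gives a complete group. For general $q\ge4$ I would exploit $q\ge4$ more carefully: the odd part of $h_P$ only contains frequencies $1$ and odd multiples of $q$ (which are at least $5$ when $q\geq 5$ is odd). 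The goal is to show, again via a Funk-type transform applied to $h(\omega)\,\omega^{\otimes 3}$ restricted to planes, that the odd part has no spherical harmonic components of degree $\ge3$. From there one reduces to the centrally symmetric case and invokes \cite{MRS}. This reduction of the odd part is the step I expect to be hardest.

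For $q=3$, the idea is to construct a body whose support function is $h=1+\varepsilon f$, where $f$ is a cubic odd spherical harmonic chosen so that its restriction to every great circle has only frequencies $1$ and $3$. A natural candidate is $f(x)=\langle x,u\rangle^3$ minus its degree-1 projection, or more generally a harmonic cubic. The restriction of a cubic form to a circle has frequencies $1,3$ only, and the frequency-$3$ part is invariant under rotation by $2\pi/3$, while the frequency-$1$ part is absorbed by a translation. Since $f$ is odd, $h(x)+h(-x)=2$, so $K$ has constant width. For small $\varepsilon$, $h$ is $C^2$-close to $1$, so $K$ is strictly convex. Because $f$ is a nonzero odd function of degree $3$, $K$ is not centrally symmetric.
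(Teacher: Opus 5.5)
Your reduction is right. Whatever $q(P)\ge4$ is, the modes $\pm2$ and $\pm3$ of $h_K$ vanish on every great circle. Your $q=3$ example $1+\varepsilon f$, with $f$ an odd cubic, works for the same reason as the paper's $1+\varepsilon u_1u_2u_3$. Your odd-part argument does settle the case where every $q(P)$ is even: all odd modes except $\pm1$ vanish, the odd part of $h_K$ is linear, $K$ is centrally symmetric after a translation, and \cite{MRS} applies. But the paper calls exactly that case known.

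In general there is a genuine gap at the step you yourself flag as the obstacle. You never show that vanishing of the frequency-$2$ (resp.\ $3$) coefficient on all great circles kills the spherical harmonic components of even degree $\ge2$ (resp.\ odd degree $\ge3$).

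For the odd part you only state this as a goal. The $q=4$ fallback is unavailable, since $q$ may vary with $P$ and be odd.

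For the even part, the isotropy of $Q_P$ is not a statement about the Funk transform $R$ of $h$. In $\R^3$ it reads $R(h\langle\cdot,e\rangle^2)(\xi)=\tfrac12(1-\langle\xi,e\rangle^2)\,Rh(\xi)$ for all $\xi,e$. Injectivity of $R$ on even functions does not turn this into ``even part constant''. What you actually need is that the functional ``second Fourier coefficient on the circle $QC_0$'' is nonzero on each $\mathcal H_\ell$ with $\ell$ even, $\ell\ge2$. That rests on $P_\ell^{2}(0)\ne0$, not on $P_\ell(0)\ne0$. You also need a mechanism that rules out cancellation between different degrees, and your sketch provides none.

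The paper supplies both missing pieces.
\begin{enumerate}
\item \emph{Degree separation.} The space $V_{2,3}$ (continuous functions with vanishing modes $\pm2,\pm3$ on every great circle) is linear, rotation invariant and closed under uniform convergence. Through $\Pi_\ell=(2\ell+1)\int_{SO(3)}\chi_\ell(Q^{-1})U_Q\,dQ$, every harmonic component $H_\ell$ of $h_K$, on each $3$-dimensional subspace, again lies in $V_{2,3}$ (Lemma~\ref{harmoniccomponent}). This removes the inter-degree cancellation.
\item \emph{One degree at a time.} Since $\mathcal H_\ell$ is irreducible, the $SO(3)$-orbit of the nonzero functional ``$r$-th coefficient on the equator'' (with $r\equiv\ell \pmod 2$, $|r|\le\ell$) spans $\mathcal H_\ell^*$. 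Hence every nonzero $Y\in\mathcal H_\ell$ shows frequency $r$ on some great circle (Lemma~\ref{l1O1}).
\end{enumerate}
Taking $r=2$ for even $\ell\ge2$ and $r=3$ for odd $\ell\ge3$ gives $H_\ell=0$ for all $\ell\ge2$. So $h_K$ is affine on each $3$-dimensional slice, these glue to a global affine function, and $K$ is a ball. This treats even and odd degrees uniformly.

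Note also that in your own plan, once the even part is constant and the odd part is linear, $h_K$ is already affine. The final appeal to \cite{MRS} is therefore unnecessary.
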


Theorem \ref{cor-main} is a direct consequence of the following statement.

\begin{theorem}\label{cor-Main}
	Let $n\ge 3$ and let $f:S^{n-1}\to\mathbb{R}$ be a continuous function. Assume that  the restriction  of $f$ to every $1$-dimensional equator has a $q$-fold rotational symmetry for some  $q=q(P)\ge 4$, i.e., for every $P\in Gr(n,2)$ there exists a rotation $\varphi_q\in SO(P,2)$ by the angle $\frac{2\pi}{q}$,  and a vector $b=b_P\in P$ such that 
	\begin{equation}\label{eq:1}
		f(\varphi_q(x))+\langle b, x\rangle=f(x)\qquad\forall x\in S^{n-1}\cap P.
		\end{equation} 
		Then there exists a constant $c\in{\mathbb R}$ and a vector $a\in{\mathbb R^n}$ such that  $f(x)=c+\langle a, x\rangle $ $\forall x\in S^{n-1}$.

		On the other hand, if $q=3$,  the function $f(x)=x_1x_2x_3$, $x\in S^{n-1}$,  satisfy (\ref{eq:1}).
\end{theorem}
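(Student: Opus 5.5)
We would prove Theorem \ref{cor-Main} by passing to spherical harmonics, after first reducing to a single fixed angle. Expand $f\sim\sum_{k\ge0} Y_k$ in spherical harmonics on $S^{n-1}$. Restricted to a great circle $S^{n-1}\cap P$ with angular coordinate $\theta$, condition (\ref{eq:1}) reads
\[
g(\theta+2\pi/q)-g(\theta)=\text{(a first-order trigonometric polynomial in }\theta).
\]
Here $g=f|_{S^{n-1}\cap P}$. In Fourier terms, for every frequency $m\ge2$ the coefficient $\hat g(m)$ satisfies $(e^{2\pi i m/q}-1)\hat g(m)=0$. So $\hat g(m)=0$ unless $q\mid m$, while frequencies $0,1$ are unrestricted.

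The problem is that $q=q(P)$ varies with $P$. We will handle this first by a Baire category argument. Consider the closed sets $A_q=\{P\in Gr(n,2): (\ref{eq:1})\text{ holds for }q\}$. These are closed by continuity of $f$ and compactness: the vectors $b_P$ are bounded, since $|b_P|$ is controlled by $\|f\|_\infty$ after taking the Fourier coefficient at frequency $1$. Also $\bigcup_{q\ge4}A_q=Gr(n,2)$.

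Baire's theorem gives an open set $U\subset Gr(n,2)$ and a fixed $q\ge4$ such that every $P\in U$ has the frequency property for this $q$. The condition on $P$ is then that all frequencies $m\ge2$ with $q\nmid m$ vanish on every circle in $U$. Since $q\ge4$, this kills frequencies $2$ and $3$ at least. The heart of the proof is deducing from this that $f$ restricted to great circles near $U$ has no frequency $2$ or $3$ components at all.

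The main obstacle is to conclude that $Y_k=0$ for $k\ge2$ from the fact that circle restrictions over an open set $U$ of planes have only frequencies in $\{0,1\}\cup q\mathbb{Z}$. A harmonic $Y_k$ restricted to a great circle carries frequencies $k,k-2,\dots$. The difficulty is that different $Y_k$ mix on a circle, so we cannot treat each degree separately. Our first attempt is an analyticity argument. The map $P\mapsto \hat g_P(2)$ for the frequency-$2$ coefficient, written in terms of an orthonormal frame $(u,v)$ of $P$ as
\[
\int_0^{2\pi} f(u\cos\theta+v\sin\theta)\,e^{-2i\theta}\,d\theta,
\]
is a Funk-type transform of $f$ against a fixed function of $\langle x,u\rangle,\langle x,v\rangle$. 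If we prove that its vanishing on an open set of frames propagates to all frames, then the frequency-$2$ coefficient vanishes for every $P$. To get the propagation, we would decompose by degree: the transform of $Y_k$ equals $\lambda_k$ times a polynomial in $(u,v)$, and the sum is real-analytic when $f$ is smooth. We would first smooth $f$ by convolution with a rotation-invariant approximate identity on $SO(n)$; this preserves the hypothesis only for rotations fixing $U$ approximately, which forces shrinking $U$.

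Once frequency $2$ vanishes on all circles, the space of functions with no frequency-$2$ component on any great circle is known to consist of even-degree harmonics of degree $0$ plus all odd-degree ones. Handling frequency $3$ similarly kills odd degrees $\ge3$. This leaves $f=c+\langle a,x\rangle$.

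For the converse with $q=3$, we check directly that $x_1x_2x_3$ restricted to any circle $x=u\cos\theta+v\sin\theta$ is a cubic in $(\cos\theta,\sin\theta)$. It is therefore a combination of frequencies $1$ and $3$, and the frequency-$3$ part is invariant under rotation by $2\pi/3$. The frequency-$1$ part changes by a linear term, so (\ref{eq:1}) holds with a suitable $b$.
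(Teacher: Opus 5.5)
There is a genuine gap, and it sits exactly where the work is. First, the Baire-category reduction is unnecessary and actually harmful. The varying $q(P)$ causes no difficulty: for every plane $P$, whatever $q(P)\ge 4$ is, neither $2$ nor $3$ lies in $\{0,\pm1\}\cup q\mathbb Z$. So your own Fourier computation already gives $\widehat{f|_{\mathcal C}}(\pm2)=\widehat{f|_{\mathcal C}}(\pm3)=0$ on \emph{every} great circle. By passing to an open set $U$ you discard this global information and then try to recover it by analytic continuation, which cannot work. Convolution with an approximate identity concentrated near the identity of $SO(n)$ produces only a $C^\infty$ function, and the frequency-$2$ transform of a $C^\infty$ function need not be real-analytic in the frame $(u,v)$. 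Your claim ``the sum is real-analytic when $f$ is smooth'' is false. In fact the local statement you reduce to is false. A smooth bump supported in a small cap restricts to $0$ on every great circle missing the cap, which is an open set of planes. So it satisfies (\ref{eq:1}) there (with $b=0$ and any $q$) without being affine.

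Second, once frequencies $2$ and $3$ vanish on all great circles, you finish by asserting that the space of such functions ``is known'' to consist of degree $0$ plus the odd degrees (respectively plus the even degrees). This is true, but it is precisely the inter-degree cancellation problem you flagged yourself, and it is the heart of the paper's proof. The paper resolves it in two steps.

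(a) The space $V_{2,3}$ of continuous functions whose restriction to every great circle has vanishing $\pm2,\pm3$ coefficients is linear, rotation invariant and closed under uniform convergence. Hence $\Pi_\ell f=(2\ell+1)\int_{SO(3)}\chi_\ell(Q^{-1})U_Qf\,dQ$, a uniform limit of Riemann sums of rotated copies of $f$, again lies in $V_{2,3}$ (Lemma~\ref{harmoniccomponent}). So each harmonic component satisfies the condition separately.

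(b) A nonzero $Y\in\mathcal H_\ell(S^2)$ has a nonzero $r$-th coefficient on some great circle whenever $|r|\le\ell$ and $r\equiv\ell\pmod 2$ (Lemma~\ref{l1O1}). The proof uses $P_\ell^{\,r}(0)\ne0$ to see that the equatorial functional $\Lambda_r$ is nonzero on $\mathcal H_\ell$. Irreducibility of the dual representation then shows that the $SO(3)$-orbit of $\Lambda_r$ spans $\mathcal H_\ell^*$.

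With these, frequency $2$ kills even $\ell\ge2$ and frequency $3$ kills odd $\ell\ge3$ on each three-dimensional subspace, and the resulting affine functions are glued into one on $S^{n-1}$. Without (a) and (b), or an equivalent argument, your proposal does not prove the affirmative part. Your verification of the $q=3$ example $x_1x_2x_3$ is correct and matches the paper.
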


The difficult part is related to the case $q$ {\it odd}, $q\ge 3$. The proof for  {\it even} $q\ge 4$  is known and  follows directly from (\cite{MRS}, Theorem 1.9). We give a different proof here.
The affirmative part of Theorem \ref{cor-Main} admits the following sharper formulation.

\begin{theorem}\label{FC1}
	Let $n\ge 3$ and  let   $f:S^{n-1}\to\mathbb{R}$ be a continuous function. Assume that for every great circle ${\mathcal C}\subset S^{n-1}$, the  Fourier coefficients of  the restriction  $f|_{\mathcal C}$ corresponding to frequencies $2$ and $3$ vanish, that is $\widehat{f|_{\mathcal C}}(\pm j)=0$, $j=2,3$.  
	Then there exists a constant $c\in{\mathbb R}$ and a vector $a\in{\mathbb R^n}$ such that  $f(x)=c+\langle a, x\rangle $ $\forall x\in S^{n-1}$.
\end{theorem}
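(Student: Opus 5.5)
The plan is to decompose $f$ into spherical harmonics and reduce the statement to a representation-theoretic fact about each space $\HH_k$ of degree-$k$ spherical harmonics on $S^{n-1}$. For a great circle $\mathcal C$ with an oriented orthonormal frame $(u,v)$ of its plane, write $\widehat{f|_{\mathcal C}}(m)=\frac1{2\pi}\int_0^{2\pi} f(\cos\theta\, u+\sin\theta\, v)e^{-im\theta}\,d\theta$. Let $V$ be the space of continuous $f$ for which these coefficients vanish for $m=\pm2,\pm3$ and every $\mathcal C$.

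\emph{Step 1: $V$ is closed, rotation invariant, and stable under projection to each $\HH_k$.} Each functional $f\mapsto\widehat{f|_{\mathcal C}}(m)$ is continuous for the sup norm. Since $g\in SO(n)$ permutes great circles, $V$ is invariant under $f\mapsto f\circ g$. The orthogonal projection $\pi_k$ onto $\HH_k$ can be written as an average over the group,
$$\pi_k f(x)=\int_{SO(n)} f(g^{-1}x)\,\psi_k(g)\,dg,$$
for a suitable continuous weight $\psi_k$ coming from the zonal harmonic. Because $f\mapsto \widehat{f|_{\mathcal C}}(m)$ is linear and continuous, it commutes with this integral, so $\pi_k f\in V$ whenever $f\in V$. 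A continuous function all of whose harmonic components vanish is zero. Hence it suffices to show $V\cap\HH_k=\{0\}$ for every $k\ge2$. The components with $k=0,1$ are exactly $c+\langle a,x\rangle$, which gives the conclusion.

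\emph{Step 2: reduction by irreducibility.} The subspace $V\cap\HH_k$ is $SO(n)$-invariant, and $\HH_k$ is irreducible under $SO(n)$ for $n\ge3$. So either $V\cap \HH_k=\{0\}$ or $\HH_k\subset V$. It therefore suffices to exhibit, for each $k\ge2$, a single $Y\in\HH_k$ and a single great circle on which the relevant Fourier coefficient is nonzero. Parity dictates which coefficient to use. The restriction of $Y\in\HH_k$ to a circle contains only frequencies $\equiv k \pmod 2$, so we aim at frequency $2$ when $k$ is even and frequency $3$ when $k$ is odd.

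\emph{Step 3: construction.} Take $\mathcal C=S^{n-1}\cap\mathrm{span}(e_1,e_2)$ and write $r^2=x_1^2+x_2^2$. Define the binary form
$$p(x_1,x_2)=r^{k-2}(x_1^2-x_2^2)\ \ (k \text{ even}),\qquad p(x_1,x_2)=r^{k-3}\,\mathrm{Re}(x_1+ix_2)^3\ \ (k\text{ odd}).$$
On $\mathcal C$ this restricts to $\cos2\theta$ or $\cos3\theta$ respectively. Extend $p$ to a harmonic polynomial in $\R^n$ by
$$P(x)=\sum_{j\ge0}\frac{(-1)^j x_3^{2j}}{(2j)!}\,\Delta_{12}^j p(x_1,x_2),$$
where $\Delta_{12}$ is the Laplacian in $x_1,x_2$. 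This sum is finite, and one checks $\partial_{x_3}^2P=-\Delta_{12}P$, so $P$ is harmonic; this is where $n\ge3$ is used. Moreover $P$ is homogeneous of degree $k$, so $Y=P|_{S^{n-1}}\in\HH_k$. Finally $Y|_{\mathcal C}=p|_{\mathcal C}$ has a nonzero coefficient at frequency $\pm2$ or $\pm3$, so $Y\notin V$. This completes the argument.

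The main obstacle is Step 1: making rigorous that the circle-wise Fourier conditions pass to each harmonic component. The group-average representation of $\pi_k$ together with the continuity of the restriction functionals should handle it. The remaining steps are standard once irreducibility of $\HH_k$ is invoked.
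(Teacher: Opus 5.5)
Your proof is correct. Its skeleton matches the paper's: group-averaging shows each harmonic component satisfies the circle condition, and irreducibility then kills every degree $k\ge2$. You execute it differently in two places.

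\emph{Dimension.} The paper never works on $S^{n-1}$ directly. It restricts $f$ to $S^{n-1}\cap E$ for each $3$-dimensional $E$ and runs the harmonic argument for $SO(3)$ on $S^2$. It then has to glue: it shows $c_E=(f(u)+f(-u))/2$ is independent of $E$, and that the homogeneous extension of $f-c$ is additive because any two vectors lie in a common $3$-space. You bypass the gluing by using the character formula and the irreducibility of $\HH_k(S^{n-1})$ under $SO(n)$ for all $n\ge3$. The price is relying on $SO(n)$ representation theory rather than only classical $SO(3)$ facts.

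The weight in your Step 1 must be the class function $\psi_k=\dim\HH_k\,\chi_k$, exactly as in the paper's lemma. Its independence of $x$ is what lets the circle functionals pass through the integral by Fubini. A zonal-kernel weight depending on $x$ would not work. Staying in $C(S^{n-1})$ here is also cleaner than the paper's detour through the $L^2$-closure $\mathcal V_{2,3}$ and back.

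\emph{Witness harmonic.} The paper proves a stronger lemma: every nonzero $Y\in\HH_\ell$ exhibits every frequency $r\equiv\ell\pmod 2$, $|r|\le\ell$, on some great circle. Its proof uses irreducibility of the dual representation and the nonvanishing $P_\ell^r(0)\neq0$ of associated Legendre functions. You instead apply irreducibility directly to the invariant subspace $V\cap\HH_k$. You then produce the witness by the explicit extension $P=\sum_j(-1)^jx_3^{2j}\Delta_{12}^j p/(2j)!$, which is elementary, works in every dimension, and needs no special-function facts. It also makes your parity remark pure motivation rather than a logical ingredient.
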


We remark that,  after restricting $f$ to a great circle, the same Fourier mode of $f$ may
	receive contributions from infinitely many ambient harmonic degrees,
	 therefore one cannot directly compare circle Fourier coefficients.
	Instead, one first observes  that the $L^2(S^2)$-closure of the set 
	\begin{equation}\label{plM1}
	V_{2,3}=\{f\in C(S^{2}):\, \forall {\mathcal C}\quad \widehat{f|_{\mathcal C}}(\pm j)=0,\,\,j=2,3\}
	\end{equation}
	   is a
	closed rotation-invariant subspace of $L^2(S^2)$. Next,  one uses harmonic projections $\Pi_\ell $, $\ell=0,1,2,\dots$,  to the invariant subspaces of ambient spherical harmonics, to show that
	$	f \in V$ implies 
	$\Pi_\ell f \in V
$ $\forall \ell$; see Lemma \ref{l1OO1}.
	Only after this separation does one analyze restrictions to
	equators degree-by-degree.

	Thus, the   key idea is to show  that 
	the {\it irreducibility prevents inter-degree cancellations}.
	This is the representation-theoretic heart of the argument.
We prove that {\it the only closed \(SO(3)\)-invariant subspace of $L^2(S^2)$ compatible with the
		equatorial symmetry condition (\ref{eq:1}) is the space of constants and linear
		functions.}
	Lemma  \ref{l1O1} is  a key technical part of the argument. It states that every nonzero spherical harmonic of degree $\ell$  exhibits every frequency of the correct
	parity on some equator. In the representation theoretic language this is equivalent to the fact that
	the orbit of one equatorial Fourier coefficient spans the entire dual representation. 

The paper is structured as follows. In Sections 2 and 3 we introduce the notation and explain the main idea of the proof. In Section 4 we prove all necessary auxiliary statements. The main results are proved  in Sections 5 and 6.

\section{Notation and auxiliary statements}

Let ${\mathbb R}$, ${\mathbb C}$ and ${\mathbb Z}$ be the sets of real, complex numbers and  integers correspondingly. 
Let $n\ge 3$. We will denote  by $dx$ the volume element on $S^{n-1}$ or on equators of $S^{n-1}$. The Fourier coefficients of $f$ will be denoted by $\widehat{f}(n)$,
$$
\widehat{f}(m)=\frac{1}{2\pi}\int\limits_0^{2\pi}f(\theta)e^{-im\theta}d\theta,\qquad m\in {\mathbb Z}.
$$
The trigonometric polynomial of degree $l$ is the expression of the form
$$
P_\ell(\theta)=\sum\limits_{j=-\ell}^\ell a_je^{ij\theta}, \qquad a_j\in {\mathbb C},\quad \theta\in [0,2\pi],\quad \ell\in {\mathbb Z}, \,\,\ell\ge 0.
$$
The {\it real } trigonometric polynomial of degree $l$ is the expression of the form
$$
T_\ell(\theta)=\sum\limits_{j=0}^\ell (b_j\cos (j\theta)+c_j\sin(j\theta)), \qquad b_j, c_j\in {\mathbb R},\quad \theta\in [0,2\pi],\quad \ell\in {\mathbb Z}, \,\,\ell\ge 0.
$$

The notation \(C(S^{n-1})\) and \(L^2(S^{n-1})\) will be used for the space of continuous and square integrable functions on $S^{n-1}$.
We will repeatedly use the well-known facts about spherical harmonics and from the theory of representations of orthogonal groups. We refer the reader to \cite{BR}, \cite{GMS}, \cite{Sch2} and \cite{V} for the information.
We denote by
$\HH_\ell(\R^n)$
the space  of homogeneous harmonic polynomials of variables $x_1,\dots, x_n$
of degree \(\ell\) in \(\R^n\) and by $\HH_\ell(S^{n-1})$ the space of their restrictions to $S^{n-1}$.
The notation $SO(n)$ and $O(n)$ is for the special orthogonal and orthogonal groups in ${\mathbb R^n}$. 
We will denote  by  $\Delta_{S^{n-1}}$ the spherical Laplacian (\cite{Gro}, \S 1.2).
We will repeatedly use the spherical harmonic decomposition, 
\[
L^2(S^2)=\widehat{\bigoplus}_{\ell=0}^\infty \mathcal H_\ell.
\]
 Here \(\mathcal H_\ell\) is the space of degree
\(\ell\) spherical harmonics. It is the eigenspace of the Laplace--Beltrami operator
\(\Delta_{S^2}\) with eigenvalue
$-\ell(\ell+1)$. We will denote by 
$
\Pi_\ell:L^2(S^2)\to \mathcal H_\ell
$
 the orthogonal projection onto $H_\ell$.

Recall  how the condition on symmetries of projections could be recorded using the {\it support functions} of convex bodies.
Let \(h_K:S^{n-1}\to\mathbb R\) be the support function of \(K\),
$$
h_K(u)=\sup_{x\in K}\langle x,u\rangle.
$$
If \(P\subset\mathbb R^n\) is a two-dimensional subspace, then for \(u\in P\cap S^{n-1}\),
$
h_{K|P}(u)=h_K(u).
$
Indeed,
\[
h_{K|P}(u)
=
\sup_{y\in K|P}\langle y,u\rangle
=
\sup_{x\in K}\langle \textrm{proj}_P x,u\rangle
=
\sup_{x\in K}\langle x,u\rangle
=
h_K(u),
\]
because \(u\in P\).

\section{Idea of the proof}

Let $q\ge 3$ and let $K$ be a convex body in ${\mathbb R^3}$. 
Assume that \(K|P\) has \(q\)-fold rotational symmetry up to translation. In other words, we assume that there exists \(b_P\in P\) such that
\[
R_P(K|P-b_P)=K|P-b_P,
\]
where \(R_P\) is
the rotation of \(P\) by \(\frac{2\pi}{q}\). 
Equivalently, the support function of $K-b_P$
is invariant under \(R_P\).

If \(e_1,e_2\) is an  orthonormal basis in \(P\), parametrize its great circle $S^2\cap P$ by
\begin{equation}\label{lpM2}
	u(\theta)=e_1\cos\theta+e_2\sin\theta,\qquad \theta\in[0,2\pi].
\end{equation}
If \(c\in P\), then
\[
\langle c,u(\theta)\rangle
=
\langle c,e_1\rangle\cos\theta+\langle c,e_2\rangle\sin\theta.
\]
We see that translating a planar convex body changes its support function on the corresponding
circle only by Fourier modes \(m=\pm1\).
{\it  Consequently, if a projection is invariant under rotation by \(\frac{2\pi}{q}\) after some
translation, then the restriction of \(h_K\) to that circle may contain only the modes}
\begin{equation}\label{modes}
{\mathcal A}=\{0,\ \pm1,\ \pm q,\ \pm2q,\ \pm3q,\dots\}.
\end{equation}

The main observation is the following

\begin{lemma}[Parity of Fourier modes]\label{lem:parity}
Let \(P_\ell(x_1,x_2,x_3)\) be a homogeneous polynomial of degree \(\ell\).
Restrict it to a great circle
$
u(\theta)=e_1\cos\theta+e_2\sin\theta,
$ $\theta\in [0,2\pi]$.
Then \(P_\ell(u(\theta))\) is a real trigonometric polynomial involving only frequencies
$
\ell,\ \ell-2,\ \ell-4,\dots,
$
down to \(0\) if \(\ell\) is even and down to \(1\) if \(\ell\) is odd.
\end{lemma}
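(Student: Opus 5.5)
The plan is to reduce to monomials and then to powers of complex exponentials. Since $P_\ell$ is a linear combination of monomials $x_1^{a}x_2^{b}x_3^{c}$ with $a+b+c=\ell$, and the set of trigonometric polynomials with frequencies in $\{\ell,\ell-2,\dots\}$ is a linear space, it suffices to treat a single monomial. Composing with $u(\theta)$, each coordinate $\langle u(\theta),\epsilon_k\rangle = \alpha_k\cos\theta+\beta_k\sin\theta$, where $\alpha_k=\langle e_1,\epsilon_k\rangle$ and $\beta_k=\langle e_2,\epsilon_k\rangle$ for the fixed orthonormal basis $\epsilon_k$. Thus $P_\ell(u(\theta))$ is a linear combination of products of exactly $\ell$ factors, each of the form $\alpha\cos\theta+\beta\sin\theta$. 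By homogeneity this is also $Q(\cos\theta,\sin\theta)$, where $Q(s,t)=P_\ell(se_1+te_2)$ is a homogeneous polynomial of degree $\ell$ in two variables.

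Next, write $\cos\theta=(e^{i\theta}+e^{-i\theta})/2$ and $\sin\theta=(e^{i\theta}-e^{-i\theta})/(2i)$. Each linear factor becomes $\gamma e^{i\theta}+\delta e^{-i\theta}$. A product of $\ell$ such factors expands into terms $e^{i(p-r)\theta}$ with $p+r=\ell$, where $p$ counts the factors contributing $e^{i\theta}$ and $r$ those contributing $e^{-i\theta}$. The frequency is therefore $m=p-r=\ell-2r$ with $0\le r\le\ell$, so $m\in\{\ell,\ell-2,\dots,-\ell\}$ and $m\equiv \ell \pmod 2$. Hence $|m|$ takes only the values $\ell,\ell-2,\dots$, ending at $0$ for even $\ell$ and at $1$ for odd $\ell$.

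Finally, realness: if $P_\ell$ has real coefficients, then $P_\ell(u(\theta))$ is a real-valued function. Its complex Fourier coefficients then satisfy $\widehat{g}(-m)=\overline{\widehat g(m)}$, and pairing the terms $\pm m$ gives a real trigonometric polynomial $\sum_m (b_m\cos m\theta+c_m\sin m\theta)$ with $m$ in the stated set. If complex coefficients are allowed, the same parity conclusion holds for $P_\ell(u(\theta))=\sum a_m e^{im\theta}$, and one applies it to the real and imaginary parts separately.

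There is no genuine obstacle here. The only point requiring care is homogeneity: every term must have exactly $\ell$ linear factors, since this is what forces $p+r=\ell$ and hence the fixed parity. Lower-degree terms would spoil the parity pattern.
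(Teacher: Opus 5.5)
Your proof is correct and follows essentially the same route as the paper: homogeneity makes $P_\ell(u(\theta))$ a combination of products of exactly $\ell$ linear forms in $\cos\theta,\sin\theta$, and Euler's formulas show that every resulting exponential has frequency $\ell-2r$ with $0\le r\le \ell$. The differences are minor: you expand each linear factor directly as $\gamma e^{i\theta}+\delta e^{-i\theta}$, whereas the paper first passes through the monomials $\cos^r\theta\sin^{\ell-r}\theta$, and your explicit realness argument via $\widehat{g}(-m)=\overline{\widehat{g}(m)}$ is left implicit in the paper.
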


The key point of the proof of Theorem \ref{cor-main} is that 
one must not first restrict the entire support function $h_K$  to a great
circle and then compare Fourier coefficients. Indeed, if $h_K=\sum\limits_{\ell=0}^{\infty}H_\ell$ is the decomposition into ambient spherical harmonics on $S^2$, after restriction, the same circle
Fourier mode $\ell$ can receive contributions from infinitely many ambient harmonics  $H_j$ of degrees $j\ge \ell$. Hence, the cancellations of modes different from the ones in (\ref{modes}) might occur.

Instead, one first observes that the symmetry condition defines a closed rotation-invariant
subspace (\ref{plM1}) of \(L^2(S^2)\), with $f=h_K$. Then, using the theory of group representations of  $SO(3)$, one can show  that the spherical-harmonic projections $H_\ell$ of  $h_K$ 
  also satisfy (\ref{eq:1}) with $f=H_\ell$  for every $\ell$. Thus, the problem may be studied
degree by degree before restricting to circles.

\section{Auxiliary lemmata}

\subsection{Allowed circle modes}

Let $P\in Gr(n,2)$ and let \({\mathcal C}=S^{n-1}\cap P\) be a great circle. Choose  parametrization $u(\theta)$, $\theta\in [0,2\pi]$, as in (\ref{lpM2}). We show now  that  if the restriction of $f\in C(S^{n-1})$ onto ${\mathcal C}$ satisfies (\ref{eq:1}), then only modes in (\ref{modes}) participate in its Fourier series decomposition.
Indeed, computing the Fourier coefficients of both parts of (\ref{eq:1}), we have
\begin{equation}\label{coef}
\int\limits_0^{2\pi}f\Big(\theta+\frac{2\pi}{q}\Big)e^{-im\theta}d\theta+\int\limits_0^{2\pi}(b_1\cos\theta+b_2\sin\theta)e^{-im\theta}d\theta=\int\limits_0^{2\pi}f(\theta) e^{-im\theta}d\theta.
\end{equation}
Since
$$
b_1\cos\theta+b_2\sin\theta=c_1e^{-i\theta}+c_2e^{i\theta}
$$
for some constants $c_1$ and $c_2$, the second term in the left-hand side  of (\ref{coef})  is non-trivial, only provided $m= \pm 1$.

Let $m\neq \pm 1$. Then (\ref{coef}) reads as
$$
e^{\frac{2\pi i m}{q}}\,\int\limits_0^{2\pi}f(\theta) e^{-im\theta}d\theta=\int\limits_0^{2\pi}f(\theta) e^{-im\theta}d\theta,
$$
which is nothing but 
$$
\Big(e^{\frac{2\pi i m}{q}}-1\Big)\widehat{f}(m)=0\qquad \forall m\in {\mathbb Z}\setminus\{\pm1\}.
$$
Then $\widehat{f}(m)=0$, unless $m\in {\mathcal A}$, where ${\mathcal A}$ was defined in (\ref{modes}).

Conversely, let the series 
$$
f(\theta)=\sum\limits_{m\in{\mathcal A}}\widehat{f}(m)e^{im\theta} \qquad \forall \theta\in [0,2\pi],
$$
be convergent in the $L^2$-sense, and let 
 ${\mathcal A}$ be  defined  by (\ref{modes}) for some $q\ge 3$. Then $f$ satisfies (\ref{eq:1}). 

\begin{definition}\label{Adm1}
	Let \(V_{{\mathcal A}^c}\subset C(S^2)\) be defined as  the space of all functions \(f\) such that, for every great
	circle \({\mathcal C}\), the restriction \(f|_{\mathcal C}\) has no Fourier modes outside
	$\mathcal A$.
	Equivalently, for every great circle \({\mathcal C}\) and every integer \(m\notin\mathcal A\),
	$\widehat{f|_{\mathcal C}}(m)=0$.
\end{definition}

\begin{example}
	Let $q=3$. For support functions, the condition \(f\in V\) expresses the fact that every planar
	projection has three-fold rotational symmetry up to translation. On each projection plane,
	the translation accounts for the first Fourier modes, and the $3$-fold symmetry accounts
	for the modes divisible by \(3\).
\end{example}

\begin{definition}\label{Adm2}
Let  \(V_{2,3}\subset C(S^2)\) be the space of all functions \(f\) such that, 
the  Fourier coefficients of  the restriction  $f|_{\mathcal C}$ corresponding to frequencies $2$ and $3$ vanish, that is $\widehat{f|_{\mathcal C}}(\pm j)=0$, $j=2,3$.  
\end{definition}

	Observe  that $V_{{\mathcal A}^c}\subset V_{2,3}$, provided $q\ge 4$.

\subsection{The cancellation problem}

Let $n=3$ and let 
\[
f=\sum_{\ell=0}^\infty H_\ell,\qquad H_\ell\in\mathcal H_\ell(S^2),
\]
be  the decomposition of $f$ into spherical harmonics, \cite{Sch2}.
Then after restriction to a great circle, the same Fourier mode can come from several
different \(H_\ell\)'s. For example, the mode \(5\) on a great circle can receive
contributions from degrees \(5,7,9,\dots\).
Hence,   equality
$\widehat{f|_C}(5)=0$
does not by itself imply that the degree \(5\) component $H_5$ vanishes. There could be
cancellations among degrees.
The way to avoid this problem is to prove first that
condition 
$f\in V$ yields $ H_\ell\in V$ for every $\ell$.
Once this is known, we can study each degree separately, and no inter-degree cancellation
is possible.

\subsection{Closedness and rotation invariance of \(V_{{\mathcal A}^c} \) and \(V_{2,3}\)}

Let $f\in C(S^2)$.
Fix the equator
\[
{\mathcal C}_0=\{(\cos\theta,\sin\theta,0):0\le\theta<2\pi\}.
\]
For \(Q\in SO(3)\), let
\[
{\mathcal C}_Q=Q {\mathcal C}_0.
\]
For a continuous function \(f\), define
\[
\Lambda_{m,Q}(f)
=
\frac1{2\pi}\int_0^{2\pi}
f\bigl(Q(\cos\theta,\sin\theta,0)\bigr)e^{-im\theta}\,d\theta,\qquad m\in{\mathbb Z}.
\]
This is the \(m\)-th Fourier coefficient of the restriction of \(f\) to the great circle
\({\mathcal C}_Q\).

\begin{lemma}
Let $q\ge 3$ and let ${\mathcal A}$ be defined by  (\ref{modes}). The spaces \(V_{{\mathcal A}^c} \) and \(V_{2,3}\) are linear  and invariant under rotations.
\end{lemma}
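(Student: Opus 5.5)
Both spaces are defined by the vanishing of the functionals \(\Lambda_{m,Q}\), so I will write them as intersections of kernels and reduce both properties to two facts about these functionals. Concretely,
\[
V_{{\mathcal A}^c}=\bigcap_{Q\in SO(3)}\ \bigcap_{m\notin{\mathcal A}}\ker\Lambda_{m,Q},\qquad
V_{2,3}=\bigcap_{Q\in SO(3)}\ \bigcap_{m\in\{\pm2,\pm3\}}\ker\Lambda_{m,Q}.
\]
This uses the fact that every great circle is of the form \({\mathcal C}_Q\) for some \(Q\in SO(3)\). It also uses the fact that the condition "\(\widehat{f|_{\mathcal C}}(m)=0\)" does not depend on the chosen parametrization of \({\mathcal C}\), as long as the index set is symmetric under \(m\mapsto -m\).

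\emph{Independence of parametrization.} Two orthonormal parametrizations of the same circle differ by a shift \(\theta\mapsto\theta+\alpha\), possibly composed with a reflection \(\theta\mapsto-\theta\). A shift multiplies the \(m\)-th coefficient by \(e^{im\alpha}\). A reflection sends the \(m\)-th coefficient to the \((-m)\)-th. Both \({\mathcal A}\) and \(\{\pm2,\pm3\}\) are symmetric under \(m\mapsto-m\), so vanishing on these sets is preserved.

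\emph{Linearity.} For fixed \(m\) and \(Q\), the map \(f\mapsto\Lambda_{m,Q}(f)\) is linear, because it is an integral of \(f\) against a fixed kernel. Each \(\ker\Lambda_{m,Q}\) is therefore a linear subspace of \(C(S^2)\), and so is any intersection of such kernels.

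\emph{Rotation invariance.} For \(R\in SO(3)\) set \((Rf)(x)=f(R^{-1}x)\). Then
\[
\Lambda_{m,Q}(Rf)=\frac1{2\pi}\int_0^{2\pi} f\bigl(R^{-1}Q(\cos\theta,\sin\theta,0)\bigr)e^{-im\theta}\,d\theta=\Lambda_{m,R^{-1}Q}(f).
\]
Now take \(f\in V_{{\mathcal A}^c}\), so that \(\Lambda_{m,Q'}(f)=0\) for all \(Q'\in SO(3)\) and all \(m\notin{\mathcal A}\). Applying this with \(Q'=R^{-1}Q\) gives \(\Lambda_{m,Q}(Rf)=0\) for every \(Q\), that is, \(Rf\in V_{{\mathcal A}^c}\). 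The argument for \(V_{2,3}\) is word for word the same. Geometrically, \(R\) permutes the great circles, and the restriction of \(Rf\) to \({\mathcal C}_Q\) equals the restriction of \(f\) to \(R^{-1}{\mathcal C}_Q\) under the induced parametrization.

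\emph{Closedness.} Closedness is not part of the statement, but it is used later, so I will record it. Each \(\Lambda_{m,Q}\) satisfies \(|\Lambda_{m,Q}(f)|\le\|f\|_\infty\), so it is a continuous functional on \(C(S^2)\). Hence both spaces are closed in the sup norm.

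For the \(L^2(S^2)\)-closure, invariance passes to the closure because rotations act as unitary operators. Continuity of \(\Lambda_{m,Q}\) in the \(L^2\) norm fails for a single circle, and that is the point requiring care. To handle it I would average over \(Q\): pairing \(\Lambda_{m,Q}(f)\) against a test function on \(SO(3)\) gives an \(L^2\)-continuous functional, which lets one pass to the closure.

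The main (mild) obstacle is therefore only this bookkeeping with parametrizations. The invariance itself is the one-line change of variables \(Q\mapsto R^{-1}Q\).
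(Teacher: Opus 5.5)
Your proof is correct and follows the paper's argument: linearity comes from the linearity of each Fourier-coefficient functional, and rotation invariance comes from rotations permuting great circles, which you encode as $\Lambda_{m,Q}(Rf)=\Lambda_{m,R^{-1}Q}(f)$. Your explicit handling of orientation-reversing reparametrizations (via the symmetry of $\mathcal A$ and $\{\pm2,\pm3\}$ under $m\mapsto -m$) fills in a detail the paper passes over with ``possibly with a shift of the angular parameter,'' and your closing remarks on closedness go beyond what this lemma asks.
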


\begin{proof} We give a proof for \(V=V_{{\mathcal A}^c} \). The proof for \(V_{2,3}\) is analogous.

	We show the rotation invariance first.
	Let \(f\in V\), and let \(Q_0\in SO(3)\). Define
	\[
	(Q_0 f)(u)=f(Q_0^{-1}u).
	\]
	Let \({\mathcal C}\) be a great circle. Then \(Q_0^{-1}{\mathcal C}\) is also a great circle. The restriction
	of \(Q_0f\) to \({\mathcal C}\) is the restriction of \(f\) to \(Q_0^{-1}{\mathcal C}\), possibly with a shift
	of the angular parameter. A shift of the angular parameter multiplies the \(m\)-th Fourier
	coefficient by \(e^{im\alpha}\), $\alpha\in [0,2\pi]$, and hence cannot create new Fourier modes. Therefore
	\(Q_0f\) has only allowed modes on every great circle, so \(Q_0f\in V\).
	
	Now, let $f,g\in V$ and let \({\mathcal C}\) be a great circle. Then the restriction of $f, g$ to \({\mathcal C}\) have only allowed modes. Then the restriction of $f+g$ to \({\mathcal C}\) has only allowed modes. Since \({\mathcal C}\) is arbitrary, $f+g\in V$. The proof  showing that $\lambda f\in V$ for any $\lambda\in {\mathbb C}$, provided $f\in V$, is similar.
	\end{proof}

\begin{lemma}\label{dlp4}
	Let $q\ge 3$ and let ${\mathcal A}$ be defined by  (\ref{modes}). The spaces \(V_{{\mathcal A}^c} \) and \(V_{2,3}\) are closed in the topology of uniform convergence on \(S^2\).
\end{lemma}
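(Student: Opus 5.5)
The plan is to write each space as an intersection of kernels of continuous linear functionals on $C(S^2)$ equipped with the sup norm. The functionals are the $\Lambda_{m,Q}$ introduced above. Every great circle is of the form ${\mathcal C}_Q$ for some $Q\in SO(3)$: take $Q$ sending $e_1,e_2$ to an orthonormal basis of the plane $P$, and $e_3$ to the unit normal chosen so that $\det Q=1$. Moreover, a different parametrization of the same circle (a shift or a reversal of $\theta$) only multiplies $\widehat{f|_{\mathcal C}}(m)$ by a unimodular factor or replaces $m$ by $-m$. Both sets of forbidden modes (the complement of ${\mathcal A}$, and $\{\pm2,\pm3\}$) are symmetric under $m\mapsto -m$. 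Hence
\[
V_{{\mathcal A}^c}=\bigcap_{Q\in SO(3)}\ \bigcap_{m\notin{\mathcal A}}\ker\Lambda_{m,Q},\qquad
V_{2,3}=\bigcap_{Q\in SO(3)}\ \bigcap_{m\in\{\pm2,\pm3\}}\ker\Lambda_{m,Q}.
\]

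The key estimate is
\[
|\Lambda_{m,Q}(f)-\Lambda_{m,Q}(g)|\le \frac1{2\pi}\int_0^{2\pi}\bigl|f-g\bigr|\bigl(Q(\cos\theta,\sin\theta,0)\bigr)\,d\theta\le \|f-g\|_\infty .
\]
It shows that each $\Lambda_{m,Q}$ is a bounded linear functional on $C(S^2)$ with norm at most $1$, uniformly in $m$ and $Q$. Concretely, suppose $f_k\in V$ and $f_k\to f$ uniformly. The limit $f$ is continuous, so $\Lambda_{m,Q}(f)$ is defined. For each forbidden $m$ and each $Q$ we then get $|\Lambda_{m,Q}(f)|=|\Lambda_{m,Q}(f)-\Lambda_{m,Q}(f_k)|\le\|f-f_k\|_\infty\to0$. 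So $f\in V$. An intersection of closed sets is closed, which gives the claim.

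The only point needing care, and it is minor, is the identification of "for every great circle" with "for every $Q$". This requires checking that the vanishing of forbidden coefficients is independent of the parametrization of the circle. That follows from the shift and reflection remarks above, which already appear in the proof of the preceding lemma on rotation invariance. I expect no real obstacle.
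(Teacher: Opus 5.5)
Your proof is correct and is essentially the paper's argument: the bound $|\Lambda_{m,Q}(f_j)-\Lambda_{m,Q}(f)|\le\|f_j-f\|_\infty$ carries the vanishing of each forbidden coefficient to the uniform limit. You also check that every great circle is some $\mathcal C_Q$ and that the condition does not depend on the parametrization; the paper leaves this implicit.
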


\begin{proof}
	Let $V=V_{{\mathcal A}^c} $ or $V=V_{2,3}$, and let 
	\(f_j\in V\). Assume \(f_j\to f\) uniformly on \(S^2\). Fix a forbidden mode
	\(m\notin\mathcal A\) and a rotation \(Q\in SO(3)\). Then
	$
	\Lambda_{m,Q}(f_j)=0
	$
	for every \(j\). Since
	\[
	|\Lambda_{m,Q}(f_j)-\Lambda_{m,Q}(f)|
	\le
	\sup_{S^2}|f_j-f|,
	\]
	we obtain
	$
	\Lambda_{m,Q}(f)=0.
	$
	Since \(m\notin\mathcal A\) and \(Q\in SO(3)\) were arbitrary, \(f\in V\).
\end{proof}

\begin{definition}
	We define
	\[
	{\mathcal V}_{{\mathcal A}^c}=\overline{V_{{\mathcal A}^c}}^{\,L^2(S^2)}, \qquad 	{\mathcal V}_{2,3}=\overline{V_{2,3}}^{\,L^2(S^2)}
	\]
	Thus, these spaces are  the \(L^2\)-closed subspaces generated by the continuous functions
	satisfying the corresponding great-circle Fourier condition.
\end{definition}

\begin{lemma}\label{votjetoda1}
	The spaces $	{\mathcal V}_{{\mathcal A}^c},	{\mathcal V}_{2,3}$ are  closed linear subspaces that are invariant under
	the natural action of \(SO(3)\).
\end{lemma}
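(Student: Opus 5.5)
The plan is to deduce everything from the two preceding lemmas, which say that $V_{{\mathcal A}^c}$ and $V_{2,3}$ are linear subspaces of $C(S^2)$, invariant under rotations. I argue for $V=V_{{\mathcal A}^c}$ and write $\mathcal V$ for its $L^2(S^2)$-closure; the case of $V_{2,3}$ is word for word the same. Three things must be checked: that $\mathcal V$ is a linear subspace, that it is closed, and that it is $SO(3)$-invariant. Closedness holds by definition, since $\mathcal V$ is an $L^2$-closure. Linearity is the standard fact that the closure of a linear subspace in a normed space is again a linear subspace. Indeed, take $f,g\in\mathcal V$ and $\lambda\in\mathbb C$. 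Choose $f_j,g_j\in V$ with $f_j\to f$ and $g_j\to g$ in $L^2$. Then $f_j+\lambda g_j\in V$ by linearity of $V$, and
\[
\|(f_j+\lambda g_j)-(f+\lambda g)\|_{L^2}\le \|f_j-f\|_{L^2}+|\lambda|\,\|g_j-g\|_{L^2}\to 0,
\]
so $f+\lambda g\in\mathcal V$.

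For invariance, I use the natural action $(Q_0f)(u)=f(Q_0^{-1}u)$, $Q_0\in SO(3)$, extended from $C(S^2)$ to $L^2(S^2)$. The key observation is that each $Q_0$ acts as a unitary operator on $L^2(S^2)$, because the surface measure $dx$ on $S^2$ is rotation invariant:
\[
\|Q_0 f\|_{L^2}^2=\int_{S^2}|f(Q_0^{-1}u)|^2\,du=\int_{S^2}|f(v)|^2\,dv=\|f\|_{L^2}^2 .
\]
In particular $Q_0$ is continuous on $L^2(S^2)$. Now take $f\in\mathcal V$ and $f_j\in V$ with $f_j\to f$ in $L^2$. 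By the rotation invariance of $V$ we have $Q_0f_j\in V$. Moreover $\|Q_0f_j-Q_0f\|_{L^2}=\|f_j-f\|_{L^2}\to0$, so $Q_0f\in\mathcal V$. Hence $Q_0\mathcal V\subset\mathcal V$ for every $Q_0$. Applying the same argument to $Q_0^{-1}$ gives equality, although inclusion already suffices for invariance.

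There is no real obstacle here. The only point needing care is that the action on $L^2$ is well defined and continuous, since elements of $L^2$ are equivalence classes. Invariance of the measure shows that $f\mapsto f\circ Q_0^{-1}$ sends null sets to null sets, so the action respects equivalence classes. It also gives the isometry property used above. The closedness of $V$ in the uniform topology (Lemma \ref{dlp4}) is not needed for this statement.
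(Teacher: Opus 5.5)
Your proof is correct and follows essentially the same route as the paper: closedness by definition, linearity inherited from $V$ under $L^2$-limits, and invariance from the unitarity of the rotation operators combined with the rotation invariance of $V$, with the inverse rotation giving equality. Your added remarks, that the $L^2$-closure of a linear subspace is linear and that the action is well defined on $L^2$ equivalence classes, only make explicit what the paper leaves implicit.
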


\begin{proof}
	Linearity is immediate because  \(V_{{\mathcal A}^c} \) and \(V_{2,3}\) are  linear, hence their  \(L^2\)-closures are  linear.
	Closedness is true by definition.
	
	It remains only to check rotation invariance. We will check it for ${\mathcal V}_{2,3}$, the proof of rotation invariance of  $	{\mathcal V}_{{\mathcal A}^c}$ is similar.
	For \(R\in SO(3)\), define
	$
	T_R f(u):=f(R^{-1}u)$, $u\in S^2$.
	The operator \(T_R\) is unitary on \(L^2(S^2)\), because the spherical measure is rotation invariant. In particular, \(T_R\) is continuous in the \(L^2\)-norm.
	
	Let \(f\in {\mathcal V}_{2,3}\). By definition of \({\mathcal V}_{2,3}\), there exists a sequence \(f_j\in V_{2,3}\) such that
	$
	f_j\to f$ in $L^2(S^2)$.
	Applying \(T_R\), and using the continuity of \(T_R\), we get
	$
	T_R f_j\to T_R f$ in $L^2(S^2)$.
	By the previous lemma \(V_{2,3}\) is rotation invariant, so \(T_R f_j\in V_{2,3}\) for every \(j\). Therefore \(T_R f\) belongs to the \(L^2\)-closure of \(V_{2,3}\), that is,
	$T_R f\in {\mathcal V}_{2,3}$.
	Hence,
	$
	T_R {\mathcal V}_{2,3}\subset {\mathcal V}_{2,3}.
	$
	Applying the same argument to \(R^{-1}\), we also obtain
	$
	T_{R^{-1}}{\mathcal V}_{2,3}\subset {\mathcal V}_{2,3}.
	$
	This implies the reverse inclusion \({\mathcal V}_{2,3}\subset T_R{\mathcal V}_{2,3}\). Consequently,
	$
	T_R{\mathcal V}_{2,3}={\mathcal V}_{2,3}.
	$
	Thus \({\mathcal V}_{2,3}\) is \(SO(3)\)-invariant.
\end{proof}

\begin{remark}
	For a general function in
	\(L^2(S^2)\), the restriction to a great circle is not intrinsically defined, since a
	great circle has spherical measure zero. Therefore the Fourier conditions on great
	circles are imposed first on continuous functions, and only then is the corresponding
	space closed in \(L^2(S^2)\). This is the natural setting for applying the spherical
	harmonic decomposition.
\end{remark}

\subsection{Spherical harmonics and spectral projections}

The following lemma is the central point, (cf. \cite{V}, Chapter I,  \S 3, section 3). It shows that harmonic projections preserve closed rotation-invariant subspaces. Recall that
$
\Pi_\ell:L^2(S^2)\to \mathcal H_\ell
$
stands for  the orthogonal projection on $\HH_\ell$, the space of spherical harmonics of degree $\ell$, $\ell=0,1,\dots$

\begin{lemma}\label{l1OO1}
Let \(W\subset L^2(S^2)\) be a closed linear subspace invariant under the natural action of
\(SO(3)\). Then
\[
f\in W \quad\Longrightarrow\quad \Pi_\ell f\in W
\]
for every \(\ell\ge0\).
\end{lemma}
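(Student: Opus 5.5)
The plan is to write $\Pi_\ell$ as an average of rotations of $f$. Then invariance and closedness of $W$ immediately give $\Pi_\ell f\in W$.

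Concretely, for $\ell\ge 0$ let $Z_\ell(t)$ denote the (suitably normalized) zonal kernel, a multiple of the Legendre polynomial $P_\ell(t)$. The addition theorem gives
\[
(\Pi_\ell f)(u)=\frac{2\ell+1}{4\pi}\int_{S^2} P_\ell(\langle u,v\rangle) f(v)\,dv .
\]
Fix the north pole $e_3$. Pull the integral back to $SO(3)$ with normalized Haar measure $dR$, writing $v=R e_3$. Since $P_\ell(\langle u,Re_3\rangle)$ is a matrix coefficient of the representation $\HH_\ell$, the convolution identity for compact groups gives
\[
\Pi_\ell f=(2\ell+1)\int_{SO(3)} \chi_\ell(R)\,T_R f\,dR ,
\]
where $\chi_\ell$ is the character of $\HH_\ell$ and the integral is a Bochner integral in $L^2(S^2)$. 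To justify this cleanly, I would check it on each $\HH_k$. For $g\in\HH_k$, Schur orthogonality says that $\int \chi_\ell(R)\,T_R g\,dR$ equals $g/(2\ell+1)$ if $k=\ell$ and $0$ otherwise; this uses that the $\HH_k$ are pairwise inequivalent irreducible representations and that $\chi_\ell$ is real. The right-hand side is a bounded operator in $f$, with norm at most $(2\ell+1)\sup|\chi_\ell|$. So the identity passes from finite sums of harmonics to all of $L^2(S^2)$ by density.

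Given the formula, the conclusion follows. The map $R\mapsto T_Rf$ is continuous from $SO(3)$ into $L^2(S^2)$, by strong continuity of the regular representation. Hence the integral is a norm limit of Riemann sums $\sum_i c_i T_{R_i} f$. Each such sum lies in $W$, by $SO(3)$-invariance and linearity of $W$, and $W$ is closed, so $\Pi_\ell f\in W$. Alternatively, one can avoid vector integrals. If $g\in W^\perp$, then $W^\perp$ is also invariant (the $T_R$ are unitary), so $\langle T_Rf,g\rangle=0$ for all $R$. Integrating against $\chi_\ell$ then gives $\langle \Pi_\ell f,g\rangle=0$, hence $\Pi_\ell f\in (W^{\perp})^{\perp}=W$. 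I would present this second version, since it only needs scalar integrals.

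The main obstacle is the Schur orthogonality step, i.e. the identity $\int\chi_\ell(R)T_R\,dR=\frac{1}{2\ell+1}\Pi_\ell$. It requires that the $\HH_k$ be irreducible and mutually inequivalent for $SO(3)$, which holds because they have distinct dimensions $2k+1$. This is standard and I would cite it from \cite{V}. Strong continuity of $R\mapsto T_Rf$ is routine: check it for continuous $f$ by uniform continuity, then extend by density.
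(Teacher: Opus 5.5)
Your proof is correct and follows the paper's own argument: both use the character formula $\Pi_\ell=(2\ell+1)\int_{SO(3)}\chi_\ell(Q^{-1})U_Q\,dQ$ together with the $SO(3)$-invariance and closedness of $W$. Your $\chi_\ell(R)$ agrees with the paper's $\chi_\ell(Q^{-1})$ because $SO(3)$ characters are real; beyond the paper, you verify the formula via Schur orthogonality instead of citing it, and your $W^\perp$ version is simply the weak form of the same Riemann-sum argument.
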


\begin{proof}
For \(Q\in SO(3)\), let
\[
(U_Qf)(u)=f(Q^{-1}u),\qquad u\in S^2.
\]
By assumption, \(U_QW=W\) for every \(Q\in SO(3)\).

Recall the standard character formula
\[
\Pi_\ell
=(2\ell+1)\int_{SO(3)}\chi_\ell(Q^{-1})U_Q\,dQ,
\]
where \(\chi_\ell\) is the character of the irreducible representation
\(\mathcal H_\ell(S^2)\), and \(dQ\) is normalized Haar measure; see, for example (\cite{BR}, Chapter 7, \S 3, formula (8)).

Let \(f\in W\). Since \(U_Qf\in W\) for every \(Q\in SO(3)\), every finite
linear combination of such vectors belongs to \(W\). The above Bochner integral is the
limit in \(L^2(S^2)\) of finite sums of this form. Since \(W\) is closed, it follows that
\(\Pi_\ell f\in W\).
\end{proof}

\begin{remark}
	This lemma is the rigorous reason why cancellations among different harmonic degrees are
	irrelevant. If \(f\in V\), then each harmonic component \(\Pi_\ell f\) is also in \(V\).
	Only after this degree separation do we restrict to great circles and look at Fourier modes.
\end{remark}

\begin{lemma}\label{harmoniccomponent}
	Let
	$
	{\mathcal V}_{2,3}$ be the closure of
	$V_{2,3}
	$ in $L^2(S^2)$
	and let
	$
	f=\sum_{\ell=0}^{\infty}H_\ell$,
	$
	H_\ell\in\mathcal H_\ell(S^2)$,
	be the spherical harmonic expansion of a function
	\(f\in{\mathcal V}_{2,3}\).
	Then
	$
	H_\ell\in V_{2,3}
	$
	for every \(\ell\ge0\).
\end{lemma}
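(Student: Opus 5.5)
The plan is to avoid the abstract statement of Lemma \ref{l1OO1} and instead work with continuous approximants. This way the great-circle Fourier conditions, which are only meaningful for continuous functions, can be checked directly. Fix $\ell\ge 0$ and $f\in{\mathcal V}_{2,3}$. By definition of ${\mathcal V}_{2,3}$ there are $f_j\in V_{2,3}\subset C(S^2)$ with $f_j\to f$ in $L^2(S^2)$. Since $\Pi_\ell$ is an orthogonal projection, it is $L^2$-continuous, so $\Pi_\ell f_j\to \Pi_\ell f=H_\ell$ in $L^2(S^2)$.

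\emph{Step 1: $\Pi_\ell f_j\in V_{2,3}$ for each $j$.} I use the character formula from the proof of Lemma \ref{l1OO1},
$$
\Pi_\ell f_j=(2\ell+1)\int_{SO(3)}\chi_\ell(Q^{-1})\,U_Qf_j\,dQ .
$$
Because $f_j$ is uniformly continuous on $S^2$, the map $Q\mapsto U_Qf_j$ is continuous from $SO(3)$ into $C(S^2)$ with the sup norm. Hence the integral may be read as a Riemann/Bochner integral in $C(S^2)$, and it agrees with the $L^2$ one.

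Now fix a rotation $R$ and a forbidden frequency $m\in\{\pm2,\pm3\}$. The functional $\Lambda_{m,R}$ is linear and bounded on $C(S^2)$ with respect to the sup norm, as used in the proof of Lemma \ref{dlp4}. It therefore commutes with the integral:
$$
\Lambda_{m,R}(\Pi_\ell f_j)=(2\ell+1)\int_{SO(3)}\chi_\ell(Q^{-1})\,\Lambda_{m,R}(U_Qf_j)\,dQ .
$$
By rotation invariance of $V_{2,3}$ (the earlier lemma), $U_Qf_j\in V_{2,3}$, so every integrand vanishes. Thus $\Pi_\ell f_j\in V_{2,3}$.

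\emph{Step 2: upgrade to uniform convergence.} The functions $\Pi_\ell f_j$ and $H_\ell$ all lie in the finite-dimensional space $\mathcal H_\ell(S^2)$. On a finite-dimensional space all norms are equivalent, in particular the $L^2(S^2)$ norm and the sup norm on $S^2$. Hence $\Pi_\ell f_j\to H_\ell$ uniformly on $S^2$. Lemma \ref{dlp4} says $V_{2,3}$ is closed under uniform convergence, so $H_\ell\in V_{2,3}$.

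The main obstacle is Step 1: justifying that the great-circle functionals can be passed through the averaging integral that defines $\Pi_\ell$. This is exactly why I work with the continuous $f_j$ rather than with $f$ itself, since for $f$ merely in $L^2$ the restriction to a circle is not defined (see the Remark after Lemma \ref{votjetoda1}). The key point is the continuity of $Q\mapsto U_Qf_j$ in the sup norm. Once that is in place, the remaining steps are formal.
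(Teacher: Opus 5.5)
Your proof is correct and follows essentially the paper's argument: the character formula with the integrand $Q\mapsto\chi_\ell(Q^{-1})U_Qf_j$ continuous into $C(S^2)$, rotation invariance of $V_{2,3}$, and then equivalence of norms on $\mathcal H_\ell(S^2)$ together with Lemma \ref{dlp4}. The differences are minor. You approximate $f$ itself and use $\Pi_\ell f_j\to\Pi_\ell f$, which makes the paper's preliminary appeal to Lemma \ref{l1OO1} unnecessary, and you pass the bounded functionals $\Lambda_{m,R}$ through the integral rather than using Riemann sums plus closedness under uniform limits.
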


\begin{proof}
Fix \(\ell\ge0\). By Lemmas~\ref{votjetoda1} and~\ref{l1OO1},
\[
H_\ell=\Pi_\ell f\in\mathcal V_{2,3}.
\]
Hence there exists a sequence \(f_j\in V_{2,3}\) such that
\[
f_j\longrightarrow H_\ell
\qquad\text{in }L^2(S^2).
\]
Since \(\Pi_\ell\) is bounded,
\[
\Pi_\ell f_j\longrightarrow H_\ell
\qquad\text{in }L^2(S^2).
\]

We claim that \(\Pi_\ell f_j\in V_{2,3}\) for every \(j\). Indeed, by the character
formula,
\[
\Pi_\ell f_j
=(2\ell+1)\int_{SO(3)}\chi_\ell(Q^{-1})U_Qf_j\,dQ.
\]
For every \(Q\in SO(3)\), rotation invariance gives \(U_Qf_j\in V_{2,3}\).
Moreover, the map
\[
Q\longmapsto \chi_\ell(Q^{-1})U_Qf_j
\]
is continuous from \(SO(3)\) to \(C(S^2)\), equipped with the uniform norm. Thus the
integral is the uniform limit of Riemann sums. Every such Riemann sum belongs to
\(V_{2,3}\), because this space is linear and rotation invariant. Since \(V_{2,3}\) is
closed under uniform convergence by Lemma~\ref{dlp4}, we obtain
\(\Pi_\ell f_j\in V_{2,3}\).

Finally, \(\Pi_\ell f_j\) and \(H_\ell\) belong to the finite-dimensional space
\(\mathcal H_\ell(S^2)\). Hence their \(L^2\)-convergence is also uniform. Applying
Lemma~\ref{dlp4} once more yields \(H_\ell\in V_{2,3}\).
\end{proof}

\subsection{Proof of Lemma \ref{lem:parity}}

\begin{proof}
	Write
	$
	e_1=(a_1,a_2,a_3)$, $e_2=(b_1,b_2,b_3)$.
	Then
	$u_i(\theta)=a_i\cos\theta+b_i\sin\theta$, $\theta\in [0,2\pi]$.
	Since \(P_\ell\) is homogeneous of degree \(\ell\), the expression \(P_\ell(u(\theta))\)
	is a homogeneous degree-\(\ell\) polynomial in \(\cos\theta\) and \(\sin\theta\). It is a
	linear combination of monomials
	$
	\cos^r\theta\,\sin^{\ell-r}\theta$, $ 0\le r\le \ell$.
	Using
	\[
	\cos\theta=\frac{e^{i\theta}+e^{-i\theta}}2,\qquad
	\sin\theta=\frac{e^{i\theta}-e^{-i\theta}}{2i},
	\]
	we see that each monomial is a linear combination of exponentials 
	$$
	e^{i(j+k)\theta}e^{-i(\ell-(j+k))\theta}=e^{-i(\ell-2(j+k))\theta}, 
	$$
	where $j=0,1,\dots, r$ and $k=0,1,\dots, \ell-r$,
 and
	\(|k+j|\le \ell\). Therefore, writing back everything in terms of $\sin$ and $\cos$ we see that  the only possible frequencies are
	$\ell,\ell-2,\ell-4,\dots,0$, provided $\ell$ is even and $\ell,\ell-2,\ell-4,\dots,1$, provided $\ell$ is odd.
\end{proof}

\subsection{Restriction of standard spherical harmonics to the equator}

Let $\ell=0,1,2,\dots$, let $\varphi \in [0,2\pi]$ and let $\vartheta \in [0,\pi]$. Recall that the complex spherical harmonic of degree \(\ell\) and order
\(r\)  has the form
\[
Y_\ell^r(\vartheta,\varphi)
=
N_{\ell r}\,
P_\ell^{\,r}(\cos\vartheta)e^{ir\varphi},\qquad r=-\ell,-\ell+1,\dots,\ell-1,\ell,
\]
where \(N_{\ell r}\neq0\) is a normalization constant,
	 \(P_\ell^{\,r}\) is the associated Legendre function,
 \(\vartheta\) is the polar angle,
and  \(\varphi\) is the azimuthal angle, (\cite{GMS}, Chapter I, \S3, section 4).

\begin{lemma}\label{votono1}
The restriction of $Y_\ell^r$ to  the equator ${\mathcal C}_0=(\cos\varphi,\sin\varphi,0)$, $\varphi\in[0,2\pi]$, has a nonzero \(r\)-th Fourier coefficient  whenever
$0\le |r|\le \ell,$ $r \equiv \ell \pmod 2$.
\end{lemma}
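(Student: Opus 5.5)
On the equator ${\mathcal C}_0$ we have $\vartheta=\pi/2$, so $\cos\vartheta=0$. Hence
\[
Y_\ell^r(\pi/2,\varphi)=N_{\ell r}\,P_\ell^{\,r}(0)\,e^{ir\varphi}.
\]
The restriction of $Y_\ell^r$ to ${\mathcal C}_0$ is therefore a single exponential. Its only possibly nonzero Fourier coefficient is the $r$-th one, and that coefficient equals $N_{\ell r}P_\ell^{\,r}(0)$. Since $N_{\ell r}\neq 0$, the plan reduces the lemma to showing that
\[
P_\ell^{\,r}(0)\neq 0\qquad\text{whenever } r\equiv \ell \pmod 2 .
\]
For negative $r$ we use the standard relation $P_\ell^{-r}=(-1)^r\frac{(\ell-r)!}{(\ell+r)!}P_\ell^{\,r}$, so it suffices to treat $0\le r\le \ell$.

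For $0\le r\le\ell$ we use
\[
P_\ell^{\,r}(x)=(-1)^r(1-x^2)^{r/2}\frac{d^r}{dx^r}P_\ell(x).
\]
At $x=0$ the prefactor $(1-x^2)^{r/2}$ equals $1$, so we need $P_\ell^{(r)}(0)\neq0$. This is the $r!$-multiple of the coefficient of $x^r$ in the Legendre polynomial $P_\ell$.

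By the explicit (Rodrigues-derived) expansion,
\[
P_\ell(x)=2^{-\ell}\sum_{k=0}^{\lfloor \ell/2\rfloor}(-1)^k\binom{\ell}{k}\binom{2\ell-2k}{\ell}x^{\ell-2k}.
\]
The coefficients of $x^{\ell-2k}$, for $0\le k\le\lfloor\ell/2\rfloor$, are all nonzero, since $2\ell-2k\ge \ell$ in that range. Thus $P_\ell$ contains every monomial $x^{r}$ with $r\equiv\ell\pmod 2$ and $0\le r\le \ell$ with a nonzero coefficient. Taking $r=\ell-2k$ gives $P_\ell^{(r)}(0)\neq 0$, and hence $P_\ell^{\,r}(0)\neq0$.

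The only point requiring care is the existence of this parity-respecting expansion with all coefficients nonzero. We would derive it directly from Rodrigues' formula,
\[
P_\ell(x)=\frac{1}{2^\ell \ell!}\frac{d^\ell}{dx^\ell}(x^2-1)^\ell .
\]
Expanding $(x^2-1)^\ell$ binomially and differentiating term by term keeps exactly the powers $x^{2\ell-2k}$ with $2\ell-2k\ge\ell$, each with a nonzero coefficient. This also matches Lemma \ref{lem:parity}: $Y_\ell^r$ is the restriction of a homogeneous harmonic polynomial of degree $\ell$, so its equatorial restriction can only carry frequencies $\equiv\ell\pmod 2$. This is consistent with $P_\ell^{\,r}(0)=0$ when $r\not\equiv\ell\pmod 2$.
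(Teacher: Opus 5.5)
Your proof is correct and follows the paper's argument. Setting $\vartheta=\pi/2$ makes the restriction the single exponential $N_{\ell r}P_\ell^{\,r}(0)e^{ir\varphi}$, which reduces the lemma to $P_\ell^{\,r}(0)\neq0$ for $r\equiv\ell\pmod 2$. The only difference is that the paper cites this nonvanishing from the literature, whereas you verify it directly: via Rodrigues' formula you show $P_\ell$ has a nonzero coefficient of $x^{|r|}$, and this is a valid, self-contained justification.
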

\begin{proof}
On the equator we have
$\vartheta=\frac{\pi}{2}$,
$\cos\vartheta=0$.
Therefore,
$Y_\ell^r|_{C_0}
=
N_{\ell r}\,
P_\ell^{\,r}(0)\,
e^{ir\theta}$.
Hence the \(r\)-th Fourier coefficient of the restriction to the
equator is nonzero exactly when
$P_\ell^{\,r}(0)\neq0$.
If \(\ell-r\) is odd, then 
	$P_\ell^{\,r}(0)=0$.
	On the other hand, if \(\ell-r\) is even, then \(P_\ell^{\,r}\) is even, and in fact
	$P_\ell^{\,r}(0)\neq0$, (see \cite{Gro}, Lemma 3.3.8, or \cite{Sch2}, Lemma 2.6).
Consequently,
$
P_\ell^{\,r}(0)\neq0$ if and only if 
$
\ell-r $ is even.
Equivalently,
$r\equiv \ell \pmod 2$.
Therefore the restriction of \(Y_\ell^r\) to the equator contains a
nonzero Fourier mode \(e^{ir\theta}\) precisely when
$r\equiv \ell \pmod2$.
\end{proof}

\subsection{A nonzero harmonic exhibits any admissible frequency on some equator}

\begin{lemma}\label{l1O1}
	Let \(Y\in\mathcal H_\ell(S^2)\), \(Y\ne0\). Let \(r\) satisfy
	$0\le |r|\le \ell,$ $ r\equiv\ell\pmod2$.
	Then there exists a great circle \({\mathcal C}\) such that the restriction \(Y|_{\mathcal C}\) has a nonzero
	\(r\)-th Fourier coefficient.
\end{lemma}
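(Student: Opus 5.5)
Consider the linear functional $\Lambda_{r,Q}$ defined earlier: it is the $r$-th Fourier coefficient of $Y|_{{\mathcal C}_Q}$. The plan is to argue by contradiction. Suppose $\Lambda_{r,Q}(Y)=0$ for every $Q\in SO(3)$.

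\emph{Step 1: the orbit of $Y$ spans all of $\mathcal H_\ell$.} Since $\Lambda_{r,Q}(Y)=\Lambda_{r,I}(U_{Q}^{-1}Y)$, the assumption says that $\Lambda_{r,I}$ vanishes on the whole orbit $\{U_QY: Q\in SO(3)\}$. By linearity it then vanishes on the span $W$ of this orbit. This span is a nonzero $SO(3)$-invariant subspace of the finite-dimensional space $\mathcal H_\ell(S^2)$, hence closed. By the irreducibility of $\mathcal H_\ell(S^2)$ under $SO(3)$ (a standard fact from the references cited for the character formula), $W=\mathcal H_\ell(S^2)$. So $\Lambda_{r,I}$ restricted to $\mathcal H_\ell(S^2)$ is the zero functional.

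\emph{Step 2: contradiction via a standard harmonic.} Take $Y_\ell^r\in\mathcal H_\ell(S^2)$. Since $r\equiv\ell\pmod 2$ and $|r|\le\ell$, Lemma \ref{votono1} shows that its restriction to ${\mathcal C}_0={\mathcal C}_I$ is $N_{\ell r}P_\ell^{\,r}(0)e^{ir\theta}$, with $r$-th coefficient $N_{\ell r}P_\ell^{\,r}(0)\neq0$. Thus $\Lambda_{r,I}(Y_\ell^r)\ne0$, contradicting Step 1. Hence some $Q$ has $\Lambda_{r,Q}(Y)\neq 0$, and ${\mathcal C}={\mathcal C}_Q$ is the required circle.

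\emph{Real-valued $Y$.} If one wants $Y$ real (as for support functions), nothing changes. The argument works verbatim over $\mathbb C$, applied to the complexified space $\mathcal H_\ell(S^2)\otimes\mathbb C$, which is irreducible and contains $Y_\ell^r$. Alternatively, note that for real $Y$ the coefficients at $r$ and $-r$ are complex conjugates, so either sign can be treated.

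\emph{Main point to check.} The only step needing care is the identity in Step 1, $\Lambda_{r,Q}(Y)=\Lambda_{r,I}(U_{Q^{-1}}Y)$. Here I must make sure the parametrization ${\mathcal C}_Q=Q{\mathcal C}_0$ matches the action $U_Q f(u)=f(Q^{-1}u)$. Directly, $\Lambda_{r,Q}(Y)=\frac1{2\pi}\int Y(Q(\cos\theta,\sin\theta,0))e^{-ir\theta}d\theta=\Lambda_{r,I}(U_{Q^{-1}}Y)$, which is exactly what is needed. The irreducibility input is quoted, not reproved.
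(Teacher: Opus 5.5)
Your proof is correct and uses the same ingredients as the paper's: the functional $\Lambda_r$ on the fixed equator, its nonvanishing on $Y_\ell^r$ from Lemma~\ref{votono1}, and the irreducibility of $\mathcal H_\ell$. The difference is that your argument is the dual form of theirs: the paper spans the orbit of $\Lambda_r$ in $\mathcal H_\ell^*$ and must first prove, via an annihilator argument, that the dual representation is irreducible, whereas you span the orbit of $Y$ itself and apply irreducibility of $\mathcal H_\ell$ directly. This is a small but real streamlining, since the paper's annihilator $E^{\perp}$ is an invariant subspace containing $Y$ (hence containing your $W$), so the detour through $\mathcal H_\ell^*$ is not needed.
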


\begin{proof}
	Fix the equator \(C_0\), and define the linear functional
	\[
	\Lambda_r(Y)
	=
	\frac1{2\pi}\int_0^{2\pi}
	Y(\cos\theta,\sin\theta,0)e^{-ir\theta}\,d\theta.
	\]
	This functional is not identically zero on \(\mathcal H_\ell\). Indeed, by the previous lemma
	spherical harmonic \(Y=Y_\ell^r\) has a nonzero \(r\)-th Fourier coefficient on the equator
	whenever \(r\equiv\ell\pmod2\).
	
	For a rotation \(Q\in SO(3)\), define
	\[
	\Lambda_{r,Q}(Y)=\Lambda_r(Y\circ Q^{-1}).
	\]
	This extracts the \(r\)-th Fourier coefficient of \(Y\) on the rotated great circle
	\(Q C_0\).
	
	Suppose, for contradiction, that \(Y\) has zero \(r\)-th Fourier coefficient on every
	great circle. Then
	\[
	\Lambda_{r,Q}(Y)=0
	\qquad\forall Q\in SO(3).
	\]
	Equivalently, \(Y\) is annihilated by every element in the \(SO(3)\)-orbit of the nonzero
	functional \(\Lambda_r\).
	
	Consider now the dual space \(\mathcal H_\ell^*=\{L:\mathcal H_\ell\to{\mathbb C} \,\,\textrm{linear}\}\). Let $Q\to \rho(Q)$ be a representation of $SO(3)$ on $\mathcal H_\ell$, defined as $\rho(Q)Y(u)=Y(Q^{-1}u)$ for every $u\in S^2$.
	Observe that rotations act on functionals by
	$$
	(\rho^*(Q)L)(Y)=L(\rho(Q^{-1})Y),
	$$
	or  $(Q\cdot L)(Y)=L(Y\circ Q)$, (cf. \cite{V}, Chapter I, \S 1.4) This is  a natural induced action since 
	$$
	(\rho^*(Q)L)(\rho(Q)Y)=L(\rho(Q^{-1})\rho(Q)Y)=L(Y).
	$$
	Also, the above $\Lambda_{r,Q}(Y)=\Lambda_r(Y\circ Q^{-1})$ is  exactly $\Lambda_{r,Q}=\rho^*(Q^{-1})\Lambda_r$.  
	
	Consider the set 
$E=span({\mathcal O})$,
where ${\mathcal O}=\{\Lambda_{r,Q}:\,Q\in SO(3)\}\}$ is the orbit of one functional $\Lambda_r$  in the dual representation. 
Observe  that $E$ is an invariant subspace of \(\mathcal H_\ell^*\). Indeed, let $N\ge 1$, let $L=\sum\limits_{j=1}^Nc_j\Lambda_{r,Q_j}\in E$ and let $Q_0\in SO(3)$. Then,
$$
\rho^*(Q_0)L=\sum\limits_{j=1}^Nc_j\rho^*(Q_0)\Lambda_{r,Q_j}.
$$
	But $\rho^*(Q_0)\Lambda_{r,Q_j}=\Lambda_{r,Q_0Q}$, 
	$$
	(\rho^*(Q_0)\Lambda_{r,Q_j})(Y)=\Lambda_{r,Q_j}(\rho(Q_0^{-1})Y)=\Lambda_r(Y\circ Q_0^{-1}\circ Q_j^{-1})=\Lambda_{r,Q_0Q_j}(Y),
	$$
	and since $Q_0Q_j\in SO(3)$ every term is again in the orbit, so $\rho^*(Q_0)L\in E$ and $E$ is an invariant subspace of the dual representation.
	
	Now we claim that $\rho^*$  is irreducible on $\mathcal H_\ell^*$. 
	Indeed, let $E\subset \mathcal H_\ell^*$ be a nonzero invariant subspace. 
	We have to show that $E=\{0\}$ or $E=\mathcal H_\ell^*$.
	Define the  annihilator of $E$ as
	$$
	E^{\perp}=\{ v\in \mathcal H_\ell:\,L(v)=0\quad\forall L\in E\},
	$$
	which is a subspace of $\mathcal H_\ell$. We claim that $E^{\perp}$ is invariant under the original representation $\rho(Q)$, $Q\in SO(3)$. Take $v\in E^{\perp}$, $Q\in SO(3)$. We have to show that $\rho(Q)v\in E^{\perp}$. Let $L\in E$. Since $E$ is invariant, $\rho^*(Q^{-1})L\in E$ and because $v\in E^{\perp}$, $(\rho^*(Q^{-1})L)(v)=0$. But $(\rho^*(Q^{-1})L)(v)=L(\rho(Q)v)$, hence $L(\rho(Q)v)=0$ for all $L\in E$, so $\rho(Q)v\in E^{\perp}$. 
	
	Since $\mathcal H_\ell$ is irreducible, $E^{\perp}=\{0\}$ or $E^{\perp}=\mathcal H_\ell$. If 
	$E^{\perp}=\mathcal H_\ell$, then every $L\in E$ vanishes identically, so $E=\{0\}$.
	If 
	$E^{\perp}=\{0\}$, then  $E$ separates points of $\mathcal H_\ell$. Hence, $E=\mathcal H_\ell^*$.
	
	Thus, the span of the orbit of any non-zero functional $\Lambda_r$ is all of $\mathcal H_\ell^*$. We see that every linear functional on $\mathcal H_\ell(S^2)$ annihilates $Y$, so $Y=0$, a contradiction.
\end{proof}

\section{Proofs of Theorems \ref{cor-Main} and \ref{FC1}}

We first prove Theorem~\ref{FC1}. The essential argument is three-dimensional.
Let \(E\subset\mathbb R^n\) be a three-dimensional subspace. The restriction of \(f\)
to \(S^{n-1}\cap E\) satisfies the hypotheses of Theorem~\ref{FC1}; identifying \(E\)
with \(\mathbb R^3\), it is therefore enough to prove that
\begin{equation}\label{linear}
f(x)=c_E+\langle a_E,x\rangle,
\qquad x\in S^{n-1}\cap E,
\end{equation}
for some \(c_E\in\mathbb R\) and \(a_E\in E\).

Fix such an \(E\), and write the spherical harmonic expansion on \(S^{n-1}\cap E\cong
S^2\) as
\[
f=\sum_{\ell=0}^{\infty}H_\ell,
\qquad H_\ell\in\mathcal H_\ell(S^2).
\]
Since \(f\in V_{2,3}\), Lemma~\ref{harmoniccomponent} gives
\(H_\ell\in V_{2,3}\) for every \(\ell\ge0\). We claim that \(H_\ell=0\) whenever
\(\ell\ge2\).

If \(\ell\ge2\) is even, then frequency \(2\) has the correct parity and satisfies
\(2\le\ell\). By Lemma~\ref{l1O1}, every nonzero element of
\(\mathcal H_\ell(S^2)\) has a nonzero second Fourier coefficient on some great circle.
This is impossible for \(H_\ell\in V_{2,3}\). Thus \(H_\ell=0\).

If \(\ell\ge3\) is odd, the same argument, now with frequency \(3\), gives
\(H_\ell=0\). Consequently,
\[
f=H_0+H_1
\qquad\text{on }S^{n-1}\cap E,
\]
which is precisely \eqref{linear}.

It remains to pass from the three-dimensional restrictions to a single affine function
on \(S^{n-1}\). For \(u\in S^{n-1}\cap E\), equation \eqref{linear} gives
\[
c_E=\frac{f(u)+f(-u)}2.
\]
Let \(E_1,E_2\) be two three-dimensional subspaces. Choose
\(u_i\in S^{n-1}\cap E_i\), \(i=1,2\), and let \(E_3\) be a three-dimensional
subspace containing \(u_1\) and \(u_2\). The preceding identity yields
\(c_{E_1}=c_{E_3}=c_{E_2}\). Hence \(c_E\) is independent of \(E\); denote its
common value by \(c\).

Set \(g=f-c\), and define the positively homogeneous extension \(G:\mathbb R^n\to
\mathbb R\) by
\[
G(0)=0,
\qquad
G(x)=|x|g\!\left(\frac{x}{|x|}\right),\quad x\ne0.
\]
On every three-dimensional subspace \(E\), the function \(G|_E\) is linear by
\eqref{linear}. Any two vectors \(x,y\in\mathbb R^n\) lie in a common
three-dimensional subspace. Therefore
\[
G(x+y)=G(x)+G(y)
\quad\text{and}\quad
G(tx)=tG(x)
\]
for all \(x,y\in\mathbb R^n\) and \(t\in\mathbb R\). Thus \(G\) is a linear
functional: \(G(x)=\langle a,x\rangle\) for some \(a\in\mathbb R^n\). Restricting to
the sphere gives
\[
f(x)=c+\langle a,x\rangle,
\qquad x\in S^{n-1}.
\]
This proves Theorem~\ref{FC1}.

Now suppose that \(f\) satisfies the hypotheses of Theorem~\ref{cor-Main} for some $q=q(P)$,
\(q\ge4\). By the Fourier calculation in Section~4.1, the restriction of \(f\) to every
great circle has no modes outside
\[
\mathcal A=\{0,\pm1,\pm q,\pm2q,\ldots\}.
\]
In particular, its Fourier coefficients of orders \(\pm2\) and \(\pm3\) vanish. Hence
Theorem~\ref{FC1} applies and proves the affirmative part of
Theorem~\ref{cor-Main}.

\subsection{The exceptional triangular case}

The case \(q=3\) is different.

\begin{proposition}\label{prop:cubic}
Let
\[
H(u)=u_1u_2u_3,\qquad u\in S^2.
\]
Then for every great circle \({\mathcal C}\), the restriction \(H|_{\mathcal C}\) has only Fourier modes \(1\)
and \(3\).
\end{proposition}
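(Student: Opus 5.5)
By Lemma~\ref{lem:parity}, the restriction of $H$ to any great circle $u(\theta)=e_1\cos\theta+e_2\sin\theta$ is a real trigonometric polynomial with frequencies only in $\{1,3\}$. The reason is that $H$ is a homogeneous cubic polynomial, so $H(u(\theta))$ is a homogeneous cubic in $\cos\theta,\sin\theta$. Consequently the modes $0$ and $\pm2$ cannot occur, and neither can any frequency of absolute value larger than $3$. The statement therefore follows almost immediately from that lemma; the only point to check is that no other modes sneak in, and the lemma already excludes them.

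For completeness, I will also write the computation out explicitly, which serves as a sanity check. Put $u_i(\theta)=a_i\cos\theta+b_i\sin\theta$ and expand
\[
\prod_{i=1}^3(a_i\cos\theta+b_i\sin\theta).
\]
Using $\cos^3\theta=\tfrac34\cos\theta+\tfrac14\cos3\theta$, $\cos^2\theta\sin\theta=\tfrac14\sin\theta+\tfrac14\sin3\theta$, and the analogous identities for the remaining two monomials, every term becomes a combination of $\cos\theta,\sin\theta,\cos3\theta,\sin3\theta$.

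I do not expect a genuine obstacle. Note, however, that $H$ is harmonic, since $\Delta(u_1u_2u_3)=0$, so $H\in\mathcal H_3(S^2)$. Lemma~\ref{l1O1} with $r=3$ then shows that the mode $3$ really does appear on some circle; this is consistent with, though not needed for, the proposition.

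Combined with Section~4.1, the proposition gives the $q=3$ part of Theorem~\ref{cor-Main}: the allowed set $\mathcal A$ for $q=3$ contains $\pm1$ and $\pm3$, so $H$ satisfies \eqref{eq:1} on every circle. Concretely, the mode-$1$ part of $H|_{\mathcal C}$ is absorbed by the translation term $\langle b,x\rangle$, and the mode-$3$ part is invariant under rotation by $2\pi/3$.
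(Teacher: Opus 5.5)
Your proposal is correct and matches the paper's proof. The paper expands $\prod_{i=1}^3(a_i\cos\theta+b_i\sin\theta)$ as a homogeneous cubic in $\cos\theta,\sin\theta$ and uses the triple-angle identities to reduce it to $A\cos\theta+B\sin\theta+C\cos3\theta+D\sin3\theta$; this is your explicit computation, and also the $\ell=3$ case of Lemma~\ref{lem:parity} that you cite first.
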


\begin{proof}
Let \({\mathcal C}=P\cap S^2\), and choose an orthonormal basis \(e_1,e_2\) of \(P\). Write  
$$
u(\theta)=e_1\cos\theta+e_2\sin\theta
$$
 and let 
$e_1=(a_1,a_2,a_3)$, $e_2=(b_1,b_2,b_3)$.
Then
$u_i(\theta)=a_i\cos\theta+b_i\sin\theta$.
Therefore,
\[
H(u(\theta))
=
\prod_{i=1}^{3}(a_i\cos\theta+b_i\sin\theta).
\]
This is a homogeneous cubic polynomial in \(\cos\theta,\sin\theta\). Hence it is a linear
combination of
\[
\cos^3\theta,\quad
\cos^2\theta\sin\theta,\quad
\cos\theta\sin^2\theta,\quad
\sin^3\theta.
\]
Using
\[
\cos^3\theta=\frac{3\cos\theta+\cos3\theta}{4},\qquad
\sin^3\theta=\frac{3\sin\theta-\sin3\theta}{4},
\]
\[
\cos^2\theta\sin\theta=\frac{\sin\theta+\sin3\theta}{4},\qquad 
\cos\theta\sin^2\theta=\frac{\cos\theta-\cos3\theta}{4},
\]
we get
\[
H(u(\theta))
=
A\cos\theta+B\sin\theta+C\cos3\theta+D\sin3\theta
\]
for some $A,B, C, D\in{\mathbb R}$.
Thus, only modes \(1\) and \(3\) occur and the result follows.
\end{proof}

\begin{remark}
Since $C\cos3\theta+D\sin3\theta$ can be written as $a\cos3(\theta-\theta_0)$ for suitable $a\in\mathbb R$ and $\theta_0\in[0,2\pi]$, the function  \(\cos 3(\theta-\theta_0)\) is invariant under
$
\theta
\mapsto
\theta+\frac{2\pi}{3},
$
and also under reflection
$
\theta
\mapsto
2\theta_0-\theta .
$
Thus, it possesses the full dihedral symmetry group \(D_3\) of a regular
triangle.
Therefore every restriction of \(u_1u_2u_3\) onto any ${\mathcal C}$ becomes
\(D_3\)-symmetric after a suitable translation.
\end{remark}

\begin{remark}
	The  result similar to the one in Proposition holds for every $n\ge 3$, namely $H(u)=u_1u_2u_3$, $u\in S^{n-1}$ satisfies the conditions  for $q=3$.
\end{remark}

\subsection{Proof of Theorem \ref{cor-main}}

The assertion for $q\ge4$ follows from Theorem~\ref{cor-Main} by applying it to the support function $f=h_K$.

\begin{theorem}[Non-spherical examples for \(q=3\)]
For all sufficiently small \(\varepsilon>0\), the function
\[
h_\varepsilon(u)=1+\varepsilon u_1u_2u_3
\]
is the support function of a smooth strictly convex non-spherical body \(K_\varepsilon\), which is of constant width $2$, 
and every planar projection of \(K_\varepsilon\) has \(3\)-fold rotational symmetry up
to translation. 
\end{theorem}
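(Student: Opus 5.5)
We need to prove four things about $h_\varepsilon=1+\varepsilon H$ with $H=u_1u_2u_3$: it is a support function of a smooth strictly convex body, the body has constant width $2$, the body is not a ball, and every planar projection is $3$-fold symmetric up to translation. We work in $\mathbb R^3$; the case $n\ge 3$ is identical with $u\in S^{n-1}$.

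\emph{Convexity.} Extend $h_\varepsilon$ $1$-homogeneously, $h_\varepsilon(x)=|x|+\varepsilon\,x_1x_2x_3/|x|^2$. It suffices to show that this extension is convex on $\mathbb R^3\setminus\{0\}$, and in fact that its Hessian restricted to $x^\perp$ is positive definite on $S^2$. This is equivalent to
\[
\nabla^2_{S^2}h_\varepsilon+h_\varepsilon I>0 \quad\text{on } S^2 .
\]
For the constant $1$ this matrix is exactly the identity. For the smooth function $H$ it is a matrix with entries bounded by a constant $M$ independent of $\varepsilon$. Hence for $\varepsilon<1/(2M)$ the matrix is positive definite everywhere on $S^2$. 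A smooth $1$-homogeneous function with this property is the support function of a convex body $K_\varepsilon$. That body is strictly convex, and it is smooth (indeed $C^\infty_+$), because the radii of curvature, i.e. the eigenvalues of the matrix above, are positive and finite.

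This step is the main quantitative point, but it is only a uniform perturbation estimate. The one care needed is that a positive semidefinite restricted Hessian gives convexity, and the strict inequality then gives strict convexity and smoothness of the boundary.

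\emph{Constant width and non-sphericity.} Since $H(-u)=-H(u)$, we get $h_\varepsilon(u)+h_\varepsilon(-u)=2$, so $K_\varepsilon$ has constant width $2$. If $K_\varepsilon$ were a ball, its support function would be $h_\varepsilon=r+\langle a,u\rangle$. Then $\varepsilon H$ would lie in $\mathcal H_0\oplus\mathcal H_1$. But $H$ is a nonzero harmonic cubic, since $\Delta(x_1x_2x_3)=0$, so $H\in\mathcal H_3$, which is orthogonal to $\mathcal H_0\oplus\mathcal H_1$. This is a contradiction, so $K_\varepsilon$ is not a ball.

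\emph{Projections.} Fix $P\in Gr(3,2)$. On the circle $P\cap S^2$, Proposition~\ref{prop:cubic} gives
\[
h_\varepsilon(u(\theta))=1+\varepsilon\bigl(A\cos\theta+B\sin\theta+C\cos3\theta+D\sin3\theta\bigr).
\]
Set $b=\varepsilon(Ae_1+Be_2)\in P$. Then $h_{K_\varepsilon|P-b}=h_\varepsilon-\langle b,\cdot\rangle$ contains only the modes $0$ and $\pm3$, so it is invariant under the rotation $\varphi_3$ by $2\pi/3$. Two convex bodies with equal support functions coincide, so
\[
\varphi_3(K_\varepsilon|P-b)=K_\varepsilon|P-b,
\qquad\text{that is}\qquad
\varphi_3(K_\varepsilon|P)+b'=K_\varepsilon|P
\]
with $b'=b-\varphi_3 b$. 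This is exactly the converse direction noted in Section~4.1, and it completes the proof.
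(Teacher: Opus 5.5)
Your proposal is correct and follows the paper's proof. You use the same perturbation of $\nabla^2_{S^{n-1}}h_\varepsilon+h_\varepsilon I$ around the identity, and the same use of Proposition~\ref{prop:cubic} to absorb the first harmonic into a translation, leaving only the modes $0,\pm3$, which are invariant under $\theta\mapsto\theta+2\pi/3$. The differences are only added detail: you check constant width explicitly via $h_\varepsilon(u)+h_\varepsilon(-u)=2$, you rule out a ball by noting that $H\in\mathcal H_3$ is orthogonal to $\mathcal H_0\oplus\mathcal H_1$ (rather than the paper's oddness/non-central-symmetry remark), and you write the translation $b'=b-\varphi_3 b$ explicitly.
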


\begin{proof}
The constant function \(1\) is the support function of the unit ball, and
\[
\nabla^2_{S^{n-1}}1+1\cdot I=I.
\]
Therefore, for sufficiently small \(|\varepsilon|\),
\[
\nabla^2_{S^{n-1}}h_\varepsilon+h_\varepsilon I
\]
is positive definite. Hence, \(h_\varepsilon\) is the support function of a smooth strictly
convex body, (\cite{G}, page 24-25). It is not centrally symmetric because \(u_1u_2u_3\) is odd.

Fix a plane \(P\). By Proposition~\ref{prop:cubic} and the same calculation in higher dimensions, on \(P\cap S^{n-1}\),
\[
u_1u_2u_3=A\cos\theta+B\sin\theta+C\cos3\theta+D\sin3\theta.
\]
The first harmonic \(A\cos\theta+B\sin\theta\) is exactly the effect of translating the
projection in \(P\). After removing this translation, the support function becomes
$
1+\varepsilon(C\cos3\theta+D\sin3\theta),
$
which is invariant under
$
\theta\mapsto\theta+\frac{2\pi}{3}.
$
Thus, each planar projection has \(3\)-fold rotational symmetry up to translation.
\end{proof}

\vspace{0.8 cm}

\noindent Sergii Myroshnychenko\\
Department of Mathematics and Statistics\\
University of the Fraser Valley\\
Abbotsford, BC V2S 7M8, Canada\\
E-mail address: serhii.myroshnychenko@ufv.ca

\vspace{0.5 cm}

\noindent Dmitry Ryabogin \\
Department of Mathematical Sciences\\
Kent State University\\
Kent, OH 44242, USA \\
E-mail address: ryabogin@math.kent.edu

\vspace{0.5 cm}

\noindent Christos Saroglou \\
Department of Mathematics \\
University of Ioannina\\
Panepistimioupoli, Ioannina 45110, Greece\\
E-mail address: csaroglou@uoi.gr \ \&\ christos.saroglou@gmail.com



\begin{thebibliography}{999}
	
\bibitem{AMU} H. Auerbach, S. Mazur, S. Ulam, Sur une propri\'et\'e caract\'eristique de l\'ellipso\"ide, \emph{Monastshe. Math.
	Phys.} {\bf 42} (1935), 45-48.
	
	\bibitem{BR} A. O. Barut, R. R\k{a}czka, Theory of Group Representations and Applications,
	\emph{World Scientific Publishing Company}, 1986, ISBN: 9789813103870, 9813103876.
	
	
\bibitem{Bur}	G. R. Burton. Congruent sections of convex bodies, \emph{Pacific. Journal of Math.}, {\bf 81}, 2, (1979), 303-316.
	
	
\bibitem{BHJM}	G. Bor, L. Hern\'andez Lamoneda, V. Jim\'enez-Desantiago, and L. Montejano, On the isometric conjecture of
	Banach, \emph{Geom. Topol}. {\bf 25} (2021), no. 5, 2621–2642.
	
	
	\bibitem{G}R. J. Gardner, Geometric tomography. Second edition,
	\emph{Encyclopedia of Mathematics and its Applications}, {\bf 58} Cambridge University Press, Cambridge, 2006.
	
	
	
	\bibitem{GMS} I. M. Gelfand, R. A. Minlos, Z. Ya. Shapiro, Representations of the  rotation and  Lorentz groups and their applications, Dover Publications, Mineola, New-York, 2018,
	ISBN: 9780486823850, 0486823857.
	
	
	\bibitem{Gro} H. Groemer, Geometric Applications of Fourier Series and Spherical Harmonics,
	\emph{Encyclopedia of Mathematics and its Applications}, {\bf 61} Cambridge University Press, Cambridge, 2009.
	
	
	
	
	\bibitem{Gr}M. Gromov, On one geometric hypothesis of Banach, [In Russian], \emph{Izv. AN SSSR}, {\bf 31}
	(5) (1967), 1105-1114.


\bibitem{IMN} S.  Ivanov, D. Mamaev, and A. Nordskova, Banach’s isometric subspace problem in dimension
four, \emph{Invent. Math}. {\bf 233} (2023), no. 3, 1393–1425.



\bibitem{K1} A. V. Kuz'minykh, The isoprojection property of the sphere, \emph{Soviet Math. Doklady}, {\bf 14} (1973), 891-895.


\bibitem{K2} A. V. Kuz'minykh, Recovery of a convex body from the set of its projections,  \emph{Siberian Math. J}., {\bf 25} (1984), 284-288.


	\bibitem{M} P. Mani, Fields of planar bodies tangent
	to spheres, \emph{Monatsh. Math.}, {\bf 74} (1970), 145-149.
	
	
	\bibitem{MRS} S. Myroshnychenko, D. Ryabogin, C. Saroglou, Star bodies with completely symmetric sections, \emph{IMRN}, {\bf 10} 2019, 3015-3031.
	
	
	
	
	\bibitem{R}D. Ryabogin, On the continual Rubik's cube, \emph{Adv. math.}, {\bf 231} (2012), 3429-3444.

	\bibitem{Sch1}R. Schneider, Convex bodies with congruent sections, \emph{Bull. London Math. Soc.},
	{\bf 12} (1980), 52-54.
	
	\bibitem{Sch2}R. Schneider, Spherical Harmonics, \emph{Lecture at Kent State University}, 2008.

	
	\bibitem{Sc}R. Schneider, Convex Bodies: The Brunn-Minkowski
	theory, 2nd Edition.
	
	\bibitem{S-W}R. Schneider and W. Weil, Stochastic and Integral Geometry, \emph{Probability and Its Applications}, Springer, 2008.
	
	\bibitem{V}N. Ya. Vilenkin, Special functions and the theory of group representations,  Providence, American Mathematical Society, 1968.
	
	\bibitem{W}W. Weil, Kontinuierliche Linearkombination von
	Strecken, \emph{Math. Z.}, $\textbf{148}$ (1976), 71-84.
	
	
	\bibitem{Za}B. Zawalski, On star-convex bodies with rotationally invariant sections, \emph{Beitr. Algebra Geom.} {\bf 65} (2024), no.3, 495-509.
	
	
	
	
	
\end{thebibliography}
\end{document}